\newtheorem{theorem}{Theorem}
\theoremstyle{definition}
\newtheorem{example}[theorem]{Example}
\newtheorem{lemma}{Lemma}
\newtheorem{corollary}{Corollary}
\newtheorem{conjecture}{Conjecture}
\title{The summation of infinite partial fraction decomposition I: some formulae related to the Hurwitz zeta function}
\author{
  Xiaowei Wang\thanks{This paper is written in Dec 2020}
   \\
}
\begin{document}
\maketitle

\begin{abstract}
In this paper we establish a new summation method by expanding  $\prod_{k}(1-\frac{z}{a_{k}})^{-1}$ with two approaches: the Taylor expansion and the infinite partial fraction decomposition. Here we focus on the case when $a_{k}$ is arithmetic sequence. By this summation we obtain many equalities involve Hurwitz zeta function and Gammma function. 
\end{abstract}

\keywords{Partial fraction, Summation, Hurwitz zeta function}

\section{Introduction}
Let $S$ be the series of rational functions $S=\sum\frac{P(n)}{Q(n)}$ with $\deg(Q)\geq \deg(P)+2$. By decomposing $\frac{P}{Q}$ into partial fractions, $S$ can be rewritten as finite linear combination of Hurwitz zeta function $\zeta(m,a)=\sum_{k=0}^{\infty}\frac{1}{(a+k)^m}$ with positive interger $m$. In this paper, we construct another series representation of $\zeta(m,a)$ by discuss two kinds of expansions of the infinite product $\prod_{k=0}^{\infty}(1-(\frac{z}{a+k})^m)$.\\
In fact, the correspondence is natural. In general, there is some relation between $\sum_{k=0}^{\infty}\frac{1}{a_{k}^m}$ and $\sum_{k=0}^{\infty}\frac{1}{F'(a_{k})}\frac{1}{a_{k}^{m+1}}$, where $F(z)$ is the product $\prod_{k=0}^{\infty}(1-(\frac{z}{a_{k}})^m)$. Basically, the relation is derived by the infinite partial fraction decomposition of $\frac{1}{F(z)}$. And the correspondence is not only valid for infinite series, but also valid for finite sum. Therefore the convergence can be discussed easily. In this we paper we study this relation and focus on the case when $a_{k}=a+k$, namely, the problem involves the Hurwitz zeta function.\\
Throughout the paper we denote the coefficient of $z^n$ in the Laurent expansion of $f(z)$ around $0$ by $[f(z)]_{n}$. The notation $F'(a_{n})$ refers to $\lim_{z\rightarrow a_{n}}F'(z)$.

\section{Prelimilary}

\begin{lemma}\label{lemma1}(Homogeneous partial fraction decomposition)\\
Let $a_{1},...,a_{n}$ be distinct complex numbers, $x\in \mathbb{C}\backslash \{a_{1},...,a_{n}\} $, then there exist $\mu_{1},...,\mu_{n}\in \mathbb{C}$ such that following identity is true,\\
\begin{equation}\label{pfd1}
  \prod_{i=1}^{n}\frac{1}{x-a_{i}}=\sum_{i=1}^{n}\frac{\mu_{i}}{x-a_{i}}
\end{equation}
where $\mu_{i}=\prod_{j=1,j\neq i}^{n}\frac{1}{a_{i}-a_{j}}$. Further, The relation $\sum_{i=1}^{n}\mu_{i}=0$ holds.\\
\end{lemma}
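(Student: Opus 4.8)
The plan is to reduce the claimed identity to a polynomial identity by clearing denominators, and to pin down the coefficients $\mu_i$ by exploiting the distinctness of the $a_i$. Concretely, I would first assume that scalars $\mu_1,\dots,\mu_n$ exist with \eqref{pfd1} valid for all $x\notin\{a_1,\dots,a_n\}$, and multiply through by $\prod_{i=1}^n(x-a_i)$. This turns \eqref{pfd1} into
$$1=\sum_{i=1}^n \mu_i\prod_{j\neq i}(x-a_j),$$
an identity between two polynomials in $x$ of degree at most $n-1$; since it holds on the infinite set $\mathbb{C}\setminus\{a_1,\dots,a_n\}$, it holds identically. Evaluating at $x=a_i$ annihilates every summand except the $i$-th — here distinctness is what guarantees $\prod_{j\neq i}(a_i-a_j)\neq 0$ — and yields $1=\mu_i\prod_{j\neq i}(a_i-a_j)$, hence $\mu_i=\prod_{j\neq i}(a_i-a_j)^{-1}$, as asserted.

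For existence I would run the same computation in reverse: set $\mu_i:=\prod_{j\neq i}(a_i-a_j)^{-1}$ and $P(x):=\sum_{i=1}^n\mu_i\prod_{j\neq i}(x-a_j)$. Then $\deg P\le n-1$, and by the evaluation just carried out $P(a_i)=1$ at the $n$ distinct points $a_1,\dots,a_n$, so $P(x)-1$ is a polynomial of degree at most $n-1$ with $n$ roots and therefore vanishes identically. Dividing $P(x)=1$ by $\prod_{i=1}^n(x-a_i)$ recovers \eqref{pfd1} off the poles. (Equivalently, one could extract each $\mu_i$ directly by the cover-up/residue method: multiply \eqref{pfd1} by $(x-a_i)$ and let $x\to a_i$.)

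For the relation $\sum_{i=1}^n\mu_i=0$ — which requires $n\ge 2$ — I would compare the coefficient of $x^{n-1}$ on the two sides of $1=\sum_{i=1}^n\mu_i\prod_{j\neq i}(x-a_j)$: each product $\prod_{j\neq i}(x-a_j)$ is monic of degree $n-1$, so the coefficient of $x^{n-1}$ on the right is $\sum_{i=1}^n\mu_i$, while on the left it is $0$ as soon as $n-1\ge 1$. An asymptotic variant does the same job: multiply \eqref{pfd1} by $x$ and let $x\to\infty$, so that the left side tends to $0$ (for $n\ge2$) and the right side tends to $\sum_{i=1}^n\mu_i$.

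I do not anticipate a genuine obstacle in this proof; the only points that need care are the invocation of the principle that a polynomial of degree less than $n$ with $n$ roots is zero — which is precisely where the hypothesis that the $a_i$ are distinct is consumed — and the observation that the relation $\sum_i\mu_i=0$ genuinely needs $n\ge2$ (for $n=1$ the empty product gives $\mu_1=1$), so the second assertion of the lemma should be read with that restriction.
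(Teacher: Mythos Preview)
Your proof is correct and is essentially the same approach the paper indicates: the paper's own ``proof'' merely says ``one can consider the Lagrange's interpolation formula'' and cites an external reference, and your polynomial $P(x)=\sum_{i}\mu_i\prod_{j\neq i}(x-a_j)$ satisfying $P(a_i)=1$ is exactly the Lagrange interpolant of the constant function $1$ at the nodes $a_1,\dots,a_n$. Your additional care about the $n\ge 2$ restriction for $\sum_i\mu_i=0$ is a welcome clarification that the paper omits.
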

\begin{proof}
The proof is easy, one can consider the Lagrange's interpolation formula. For more details, See author's paper \cite{xiaowei}.
\end{proof}

\begin{lemma}\label{lemma2}(One point lifting)\\
Suppose that $N, L\in \mathbb{Z}_{\geq 0}$ and $a_{n}$ are complex numbers that differ from each other, then the following identity holds
\begin{equation*}
  \frac{1}{x^L}\prod_{n=1}^{N}\frac{1}{x-a_{n}}=\sum_{j=1}^{L}\sum_{n=1}^{N}\frac{-\mu_{n}}{a_{n}^{L-j+1}}\frac{1}{x^{j}}+\sum_{n=1}^{N}\frac{\mu_{n}}{a_{n}^{L}}\frac{1}{x-a_{n}}
\end{equation*}
where
\begin{equation*}
\mu_{n}=\prod_{s=1,s\neq n}^{N}\frac{1}{a_{n}-a_{s}}
\end{equation*}
\end{lemma}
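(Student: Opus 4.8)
The plan is to reduce the claim to Lemma~\ref{lemma1} by peeling off one power of $1/x$ at a time. The key auxiliary fact is the elementary single--pole identity
\[
\frac{1}{x^{L}(x-a)}=\frac{1}{a^{L}}\,\frac{1}{x-a}-\sum_{j=1}^{L}\frac{1}{a^{L-j+1}}\,\frac{1}{x^{j}},\qquad a\neq 0,
\]
valid for every $L\in\mathbb{Z}_{\geq 0}$ (the sum being empty when $L=0$). I would prove this in one line from the finite geometric identity $\dfrac{x^{L}-a^{L}}{x-a}=\sum_{i=0}^{L-1}x^{i}a^{L-1-i}$: dividing by $x^{L}$ gives $\dfrac{x^{L}-a^{L}}{x^{L}(x-a)}=\sum_{j=1}^{L}\dfrac{a^{j-1}}{x^{j}}$, so that $\dfrac{1}{x-a}=\dfrac{a^{L}}{x^{L}(x-a)}+\sum_{j=1}^{L}\dfrac{a^{j-1}}{x^{j}}$, and dividing through by $a^{L}$ yields the asserted formula. (Alternatively one can induct on $L$ from the base case $\frac{1}{x(x-a)}=\frac{1}{a}\bigl(\frac{1}{x-a}-\frac{1}{x}\bigr)$, using $\frac{1}{x^{L+1}(x-a)}=\frac1x\cdot\frac{1}{x^{L}(x-a)}$.)

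With this in hand, I would apply Lemma~\ref{lemma1} to write $\prod_{n=1}^{N}\frac{1}{x-a_{n}}=\sum_{n=1}^{N}\frac{\mu_{n}}{x-a_{n}}$ with $\mu_{n}=\prod_{s\neq n}\frac{1}{a_{n}-a_{s}}$, multiply both sides by $x^{-L}$, and substitute the auxiliary identity (with $a=a_{n}$) into each summand $\frac{1}{x^{L}(x-a_{n})}$. This produces
\[
\frac{1}{x^{L}}\prod_{n=1}^{N}\frac{1}{x-a_{n}}=\sum_{n=1}^{N}\mu_{n}\left(\frac{1}{a_{n}^{L}}\,\frac{1}{x-a_{n}}-\sum_{j=1}^{L}\frac{1}{a_{n}^{L-j+1}}\,\frac{1}{x^{j}}\right),
\]
and interchanging the two \emph{finite} summations in the double sum gives exactly the stated right-hand side $\sum_{j=1}^{L}\sum_{n=1}^{N}\frac{-\mu_{n}}{a_{n}^{L-j+1}}\frac{1}{x^{j}}+\sum_{n=1}^{N}\frac{\mu_{n}}{a_{n}^{L}}\frac{1}{x-a_{n}}$.

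A few remarks on edge cases I would dispose of at the outset: for $L=0$ the first double sum is empty and the statement is just Lemma~\ref{lemma1}; the substantive case is $N\geq 1$, which is implicitly assumed since both $\mu_{n}$ and the right-hand side presuppose at least one index. The auxiliary identity requires $a_{n}\neq0$ for every $n$, so this should be recorded as a standing hypothesis (consistent with the later specialization $a_{n}=a+k$ with $a$ avoiding the non-positive integers). Since everything is an identity of rational functions on $\mathbb{C}\setminus(\{0\}\cup\{a_{1},\dots,a_{N}\})$, there is no analytic obstacle at all; the only point requiring care is index bookkeeping — matching the exponent $L-j+1$ with the running index $j\in\{1,\dots,L\}$ and tracking the overall sign — which is why I prefer deriving the single--pole identity cleanly first rather than attempting a direct multivariable partial-fraction computation.
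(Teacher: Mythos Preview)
Your proof is correct and takes a genuinely different route from the paper. You first isolate the elementary single-pole identity
\[
\frac{1}{x^{L}(x-a)}=\frac{1}{a^{L}}\,\frac{1}{x-a}-\sum_{j=1}^{L}\frac{1}{a^{L-j+1}}\,\frac{1}{x^{j}},
\]
obtained from the finite geometric sum, then apply Lemma~\ref{lemma1} to $\prod_{n=1}^{N}\frac{1}{x-a_{n}}$ and substitute the single-pole identity termwise. The paper instead adjoins an extra node $z_{0}$, writes $\prod_{n=0}^{N}\frac{1}{x-z_{n}}$ via Lemma~\ref{lemma1}, and then takes $l=L-1$ partial derivatives with respect to $z_{0}$ (computed in two ways) before specializing $z_{0}=0$ and $z_{i}=a_{i}$.

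Your argument is cleaner and more elementary: it reduces the whole lemma to one algebraic identity plus Lemma~\ref{lemma1}, with no calculus and no auxiliary variable. The paper's differentiation trick is heavier for this statement but has a structural advantage you may want to note: it lifts the order at a \emph{generic} node $z_{0}$, so the same computation would handle a higher-order pole at any point $a_{0}$ (not just at $0$), and in principle iterates to several repeated nodes. Your approach would require re-deriving the single-pole identity with $x$ replaced by $x-a_{0}$, which is of course trivial, but the differentiation viewpoint makes the uniformity manifest. Your remark that $a_{n}\neq 0$ is a necessary standing hypothesis is well taken; the paper's proof needs it too (at the final specialization $z_{0}=0$), though it is not stated explicitly there.
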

\begin{proof}
Firstly one can see the case $L=0$ is exactly Lemma \ref{pfd1}, hence the the identity is true for $\ell=0$. Following we assume that $L\in \mathbb{Z}_{\geq 1}$. Suppose that $z_{0},z_{1},...,z_{N}$ be complex variables that take distinct values. Let $H(z_{0},z_{1},...,z_{n})=\prod_{n=0}^{N}\frac{1}{x-z_{n}}$. Then by Lemma 1 we have the identity\\
\begin{equation*}
H(z_{0},z_{1},...,z_{n})=\sum_{n=0}^{N}\frac{\lambda_{n}}{x-z_{n}}
\end{equation*}
Where $\lambda_{n}=\prod_{s=0,s\neq n}^{N}\frac{1}{z_{n}-z_{s}}$. Now we taking partial derivatives of $H$ with respect to $z_{0}$. There are two ways to carry out. On the one hand,\\
\begin{equation}\label{lemma2.1}
\frac{\partial^{l}H}{\partial z_{0}^{l}}=\frac{l!}{(x-z_{0})^{l+1}}\prod_{n=1}^{\infty}\frac{1}{x-z_{n}}
\end{equation}
On the other hand,\\
\begin{align*}
  \frac{\partial^{l}H}{\partial z_{0}^{l}}&=\frac{\partial^{l}}{\partial z_{0}^{l}}(\lambda_{0}\frac{1}{x-z_{0}})+\sum_{n=1}^{N}\frac{1}{x-z_{n}}\frac{\partial^{l}\lambda_{n}}{\partial z_{0}^{l}}\\
  &=\sum_{j=0}^{l}\binom{l}{j}\frac{\partial^{l-j}\lambda_{0}}{\partial z_{0}^{l-j}}\frac{ j!}{(x-z_{0})^{j+1}}+\sum_{n=1}^{N}\frac{1}{x-z_{n}}\frac{\partial^{l}\lambda_{n}}{\partial z_{0}^{l}}\\
\end{align*}
Recall that $\lambda_{0}=\prod_{s=1}^{N}\frac{1}{z_{0}-z_{s}}$. reapplying Lemma \ref{pfd1}\\
\begin{equation*}
  \lambda_{0}=\sum_{n=1}^{N}\frac{M_{n}}{z_{0}-z_{n}}
\end{equation*}
where $M_{n}=\prod_{s=1,s\neq n}^{N}\frac{1}{z_{n}-z_{s}}$. Therefore\\
\begin{equation*}
  \frac{\partial^{l-j}\lambda_{0}}{\partial z_{0}^{l-j}}=\sum_{n=1}^{N}M_{n}\frac{(-1)^{l-j} (l-j)!}{(z_{0}-z_{n})^{l-j+1}}
\end{equation*}
Further, for $n=1,...,N$\\
\begin{equation*}
  \frac{\partial^{l}\lambda_{n}}{\partial z_{0}^{l}}=\frac{l!M_{n}}{(z_n-z_{0})^{l+1}}
\end{equation*}
Now compare to (\ref{lemma2.1}) we obtain\\
\begin{equation*}
  \frac{1}{(x-z_{0})^{l+1}}\prod_{n=1}^{\infty}\frac{1}{x-z_{n}}=\sum_{j=0}^{l}\sum_{n=1}^{N}\frac{M_{n}(-1)^{l-j} (l-j)!}{(z_{0}-z_{n})^{l-j+1}}\frac{ 1}{(x-z_{0})^{j+1}}+\sum_{n=1}^{N}\frac{1}{x-z_{n}}\frac{M_{n}}{(z_n-z_{0})^{l+1}}
\end{equation*}
Lastly let $L=l+1$, $z_{0}=0$, and $z_{i}=a_{i}$, $\mu_{n}=M_{n}|_{z_{i}=a_{i}}$ for $i=1,...,N$ respectively, we obtain what required.
\end{proof}
\newpage

\section{Main Results}
In the last section we introduce some lemmas about partial fraction decomposition. In follow we discuss the partial fraction expansion of the infinite product $(z^{L}F(z))^{-1}=z^{-L}\prod_{n=1}^{\infty}(1-\frac{z}{a_{n}})^{-1}$. We then show that the rearrangement to the power series of $z$ remains invariant for different $L$.\\

\begin{theorem}\label{thm1}(\textbf{Partial Fraction Summation})\\
Let $(a_{n})$ be a sequence in $\mathbb{C}\backslash\{0\}$ and satisfy that: 1, $a_{n}$ differ from each other for all $n$; 2, $\sum_{n=1}^{\infty}\frac{1}{a_{n}}$ absolutely converges. Define\\
\begin{equation*}
    F(z)=\prod_{n=1}^{\infty}(1-\frac{z}{a_{n}})
\end{equation*}
then following identity holds for $k=0,1,2,...$.\\
\begin{equation*}
[\frac{1}{F(z)}]_{k}=\sum_{n=1}^{\infty}\frac{-1}{F'(a_{n})}\frac{1}{a_{n}^{k+1}}
\end{equation*}
On the other hand, for $L=0,1,2,...$, $\frac{1}{F(z)}$ has the following partial fraction decompositions\\
\begin{equation*}
    \frac{1}{F(z)}=[\frac{1}{F(z)}]_{0}+[\frac{1}{F(z)}]_{1}z+...+[\frac{1}{F(z)}]_{L-1} z^{L-1}+\sum_{n=1}^{\infty}\frac{z^L}{F'(a_{n})a_{n}^{L}(z-a_{n})}
\end{equation*}
\textbf{For given $L$, we call $L$ the order of this expansion}.\\
\end{theorem}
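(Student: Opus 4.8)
The natural route is to pass to a finite truncation, apply the finite identities from the Preliminary section, and then let the truncation go to infinity. Write $F_N(z)=\prod_{n=1}^{N}(1-z/a_n)$, a polynomial of degree $N$ with simple zeros at $a_1,\dots,a_N$. I would first express $1/F_N$ directly from Lemma \ref{lemma2}: since $1/F_N(z) = (-1)^N\bigl(\prod a_n\bigr)\prod_{n=1}^N \frac{1}{z-a_n}$, and more to the point $1/(z^L F_N(z))$ is covered verbatim by Lemma \ref{lemma2} with $x=z$, $a_n$ the zeros, once one matches the normalizing constants. The key bookkeeping fact to record is that $\mu_n$ in Lemma \ref{lemma2} (the partial-fraction coefficient of $1/(z-a_n)$ in $\prod 1/(z-a_n)$, rescaled) is precisely $-1/F_N'(a_n)$: indeed $F_N'(a_n) = -\frac{1}{a_n}\prod_{s\neq n}(1-a_n/a_s)$, and a short computation turns $\prod_{s\neq n}\frac{1}{a_n-a_s}$ together with the leading constant of $F_N$ into $-1/F_N'(a_n)$. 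With that dictionary in place, Lemma \ref{lemma2} with $L=0$ gives the first claim at finite level, $[1/F_N(z)]_k = \sum_{n=1}^N \frac{-1}{F_N'(a_n)}\frac{1}{a_n^{k+1}}$ (reading off the coefficient of $z^k$ from $\sum_n \frac{\mu_n}{z-a_n} = -\sum_n \frac{\mu_n}{a_n}\sum_{k\geq 0}(z/a_n)^k$), and the general-$L$ version gives the finite partial fraction decomposition with the first $L$ Taylor coefficients split off and a remainder $\sum_{n=1}^N \frac{z^L}{F_N'(a_n)a_n^L(z-a_n)}$.

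**Passing to the limit.**
The remaining — and genuinely substantive — work is the limit $N\to\infty$. The hypothesis $\sum 1/|a_n| < \infty$ guarantees that $F_N \to F$ uniformly on compact sets and that $F$ is an entire function whose zero set is exactly $\{a_n\}$, each simple; hence $F_N'(a_n)\to F'(a_n)$ for each fixed $n$ and in particular $F'(a_n)\neq 0$, so the right-hand sides make sense. For the coefficient identity I would argue that $[1/F_N(z)]_k \to [1/F(z)]_k$ (coefficientwise convergence follows from locally uniform convergence of $1/F_N \to 1/F$ near $0$, e.g. by Cauchy's integral formula for the coefficients on a small circle avoiding all $a_n$), and separately that $\sum_{n=1}^N \frac{-1}{F_N'(a_n)a_n^{k+1}} \to \sum_{n=1}^\infty \frac{-1}{F'(a_n)a_n^{k+1}}$. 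The latter needs a uniform-in-$N$ tail bound: one must control $1/|F_N'(a_n)|$ from above uniformly in both $n$ and $N$ so that, combined with $\sum 1/|a_n|^{k+1}<\infty$ (for $k\geq 0$ this follows from $\sum 1/|a_n|<\infty$ once one notes $a_n\to\infty$), dominated convergence applies. This bound is where the real analysis lives, and I expect it to be the main obstacle: one wants something like $|F_N'(a_n)| = \frac{1}{|a_n|}\prod_{s\neq n, s\leq N}|1-a_n/a_s| \geq \frac{c}{|a_n|}$ uniformly, which requires the $a_n$ not to cluster too badly; under mere absolute convergence of $\sum 1/a_n$ some care (or a mild additional separation assumption) is needed, and I would look to the author's companion paper \cite{xiaowei} for the precise estimate.

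**Finishing the partial fraction decomposition.**
For the second assertion, with the constant and linear-through-degree-$(L-1)$ terms identified at finite level as the Taylor coefficients $[1/F_N]_0,\dots,[1/F_N]_{L-1}$ (which converge to $[1/F]_0,\dots,[1/F]_{L-1}$), it remains to show the finite remainder $R_N(z) := \sum_{n=1}^N \frac{z^L}{F_N'(a_n)a_n^L(z-a_n)}$ converges, for $z$ fixed away from all $a_n$, to $\sum_{n=1}^\infty \frac{z^L}{F'(a_n)a_n^L(z-a_n)}$, and that this limit equals $1/F(z) - \sum_{j=0}^{L-1}[1/F(z)]_j z^j$. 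The first of these is again a dominated-convergence statement using the same tail bound on $1/|F_N'(a_n)|$ together with $|z-a_n|\to\infty$ and $\sum 1/|a_n|^{L+1}<\infty$; the second is then automatic by subtracting the two limit relations already proved (the Taylor part plus the series must reassemble $1/F$ because they do so at every finite stage and all three pieces converge). One subtlety worth a sentence: for $L=0$ the ``empty'' Taylor sum is $0$ and the statement reduces to the bare partial fraction expansion $1/F(z) = \sum_n \frac{-1}{F'(a_n)(z-a_n)}\cdot\frac{1}{1}$ — wait, with $a_n^0=1$ and $z^0=1$ this is $\sum_n \frac{1}{F'(a_n)(z-a_n)}$, which should be checked to be consistent with, e.g., the Mittag-Leffler expansion; I would flag that the sign and indexing conventions line up here as a sanity check before stating the general $L$ case.
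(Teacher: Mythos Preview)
Your approach is essentially the paper's: apply Lemma~\ref{lemma2} to the truncation $F_N$, identify the partial-fraction residues as $1/F_N'(a_n)$ (the paper records this as $\lambda_n = 1/F'(a_n)$ after the limit), and let $N\to\infty$; the paper then compares two Laurent expansions of $1/(z^L F(z))$ near $0$---one from the Taylor series of $1/F$, one from re-expanding each partial-fraction term $\frac{\lambda_n}{a_n^L(z-a_n)}$ as a geometric series---whereas you read off the coefficient identity already at the finite level and then pass to the limit, which is the same computation reordered. You are more explicit than the paper about the analytic step: the paper simply asserts the $N\to\infty$ limit and the subsequent rearrangement, invoking only ``the absolute convergence of $\sum 1/a_n$'' without supplying the uniform tail bound on $1/|F_N'(a_n)|$ that you correctly flag as the substantive point.
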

\begin{proof}
Let $M=\inf(|a_{n}|)$, in the proof we always assume that $|z|<M$. Consider the expansion of $\frac{1}{F(z)}$, namely,\\
\begin{equation*}
    \frac{1}{F(z)}=\exp(-\sum_{n=1}^{\infty}\log(1-\frac{z}{a_{n}}))=\exp(\sum_{k=1}^{\infty} \sum_{n=1}^{\infty} \frac{1}{a_{n}^k}\frac{z^k}{k})
\end{equation*}
Denote $[\frac{1}{F(z)}]_{k}$ by $c_{k}$, it can be computed by expanding the right hand side in the above formula. Then\\
\begin{equation}\label{thm1.1}
    \frac{1}{z^{L}F(z)}=\frac{1}{z^{L}}(c_{0}+c_{1}z+c_{2}z^2+...)
\end{equation}
On the other hand, consider the another expansion via Lemma \ref{lemma2} and let $N\rightarrow \infty$, that is\\
\begin{equation}\label{thm1.2}
     \frac{1}{z^{L}F(z)}=\frac{1}{z^L}(\sum_{n=1}^{\infty}\frac{-\lambda_{n}}{a_{n}})+...+\frac{1}{z^2}(\sum_{n=1}^{\infty}\frac{-\lambda_{n}}{a_{n}^{L-1}})+\frac{1}{z}(\sum_{n=1}^{\infty}\frac{-\lambda_{n}}{a_{n}^L})+\sum_{n=1}^{\infty}\frac{\lambda_{n}}{a_{n}^{L}(z-a_{n})} 
\end{equation}
Where $\lambda_{n}=-a_{n}\prod_{s=1,s\neq n}^{\infty}\frac{a_{s}}{a_{s}-a_{n}}$. Further, we can reformulate $\lambda_{n}$ as following\\
\begin{equation*}
    \lambda_{n}=-a_{n}\lim_{z-a_{n}\rightarrow 0}\frac{a_{n}-z}{a_{n}}\prod_{s=1}^{\infty}\frac{a_{s}}{a_{s}-z}=\frac{1}{F'(a_{n})}
\end{equation*}
Now consider the expansion of $\sum_{n=1}^{\infty}\frac{\lambda_{n}}{a_{n}^{L}(z-a_{n})}$, due to the absolute convergence of $\sum\frac{1}{a_{n}}$, for $|z|<M$ we have \\
\begin{equation*}
    \sum_{n=1}^{\infty}\frac{\lambda_{n}}{a_{n}^{L}(z-a_{n})}=\sum_{n=1}^{\infty}\frac{-\lambda_{n}}{a_{n}^{L+1}}+\sum_{n=1}^{\infty}\frac{-\lambda_{n}}{a_{n}^{L+2}}z+\sum_{n=1}^{\infty}\frac{-\lambda_{n}}{a_{n}^{L+3}}z^2+...
\end{equation*}
Then equation (\ref{thm1.2}) becomes\\
\begin{equation}
    \frac{1}{z^{L}F(z)}=\frac{1}{z^L}(\sum_{n=1}^{\infty}\frac{-\lambda_{n}}{a_{n}}+\sum_{n=1}^{\infty}\frac{-\lambda_{n}}{a_{n}^{2}}z+...+\sum_{n=1}^{\infty}\frac{-\lambda_{n}}{a_{n}^{L+1}}z^{L}+...)
\end{equation}
Compare to (\ref{thm1.1}) we have for $k=0,1,2,3,...$, the equalities hold\\
\begin{equation*}
    c_{k}=\sum_{n=1}^{\infty}\frac{-1}{F'(a_{n})}\frac{1}{a_{n}^{k+1}}
\end{equation*}
\end{proof}

\begin{corollary}
As a direct corollary of theorem \ref{thm1}, \\
\begin{align*}
   & \sum_{n=1}^{\infty}\frac{-1}{F'(a_{n})}\frac{1}{a_{n}}\equiv 1 \\
   & \sum_{n=1}^{\infty}\frac{-1}{F'(a_{n})}\frac{1}{a_{n}^2}=\sum_{n=1}^{\infty}\frac{1}{a_{n}}\\
   &\sum_{n=1}^{\infty}\frac{-1}{F'(a_{n})}\frac{1}{a_{n}^3}=\frac{1}{2}((\sum_{n=1}^{\infty}\frac{1}{a_{n}})^2+\sum_{n=1}^{\infty}\frac{1}{a_{n}^2})\\
   &...
\end{align*}
\end{corollary}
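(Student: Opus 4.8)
The corollary is an immediate consequence of Theorem \ref{thm1}, so the plan is essentially to compute the first few Laurent coefficients $c_k = [1/F(z)]_k$ explicitly and match them against the theorem's formula $c_k = \sum_{n=1}^\infty \frac{-1}{F'(a_n)}\frac{1}{a_n^{k+1}}$.

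First I would recall from the proof of Theorem \ref{thm1} the generating-function identity $\frac{1}{F(z)} = \exp\!\big(\sum_{k=1}^\infty p_k \frac{z^k}{k}\big)$, where I write $p_k = \sum_{n=1}^\infty a_n^{-k}$ for the power sums (so $p_1 = \sum 1/a_n$, $p_2 = \sum 1/a_n^2$, etc., all convergent for $k\ge 1$ by the absolute convergence of $\sum 1/a_n$ together with boundedness of $1/a_n$). Expanding the exponential as a power series in $z$ gives $c_0 = 1$, $c_1 = p_1$, $c_2 = \tfrac12(p_1^2 + p_2)$, and in general $c_k$ is the complete Bell polynomial in the $p_j/j$ — but only the first three are needed here. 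Substituting $k=0,1,2$ into the theorem's formula then yields exactly the three displayed equalities, with the first one being the normalization $\sum_{n=1}^\infty \frac{-1}{F'(a_n)}\frac{1}{a_n} = c_0 = 1$.

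Concretely: for $k=0$, Theorem \ref{thm1} gives $\sum \frac{-1}{F'(a_n)} a_n^{-1} = c_0$, and reading off the constant term of $\exp(\sum p_k z^k/k)$ gives $c_0 = 1$. For $k=1$, differentiating (or reading the linear term) gives $c_1 = p_1 = \sum 1/a_n$. For $k=2$, the coefficient of $z^2$ in $\exp(p_1 z + \tfrac{p_2}{2}z^2 + \cdots)$ is $\tfrac12 p_1^2 + \tfrac12 p_2 = \tfrac12\big((\sum 1/a_n)^2 + \sum 1/a_n^2\big)$. These three computations, each a one-line power-series expansion, complete the corollary.

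There is no real obstacle here — the only point requiring a word is that all the series involved ($\sum 1/a_n^{k+1}$ on the left, $p_k$ on the right) converge absolutely under the hypotheses of Theorem \ref{thm1}, which has already been established; and that the interchange of summation and exponentiation implicit in reading off $c_k$ is exactly the manipulation performed (and justified, for $|z|<M$) in the proof of Theorem \ref{thm1}. So the corollary follows by simply specializing that theorem at $k=0,1,2$ and expanding the exponential generating function to order $z^2$.
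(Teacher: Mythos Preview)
Your proposal is correct and is exactly the intended argument: the paper gives no separate proof for this corollary, treating it as immediate from Theorem \ref{thm1}, and your computation of $c_0,c_1,c_2$ via the identity $\tfrac{1}{F(z)}=\exp\big(\sum_{k\ge 1}p_k z^k/k\big)$ from that theorem's proof is precisely the one-line expansion the paper has in mind.
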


\begin{theorem}\label{thm2}(\textbf{Partial fraction summation for symmetric products})\\
Let $(a_{n})$ be a sequence in $\mathbb{C}\backslash\{0\}$ and satisfy that: 1, $a_{n}$ differ from each other for all $n$; 2, $\sum_{n=1}^{\infty}\frac{1}{a_{n}^2}$ absolutely converges. Define\\
\begin{equation*}
    F(z)=\prod_{n=1}^{\infty}(1-(\frac{z}{a_{n}})^2)
\end{equation*}
then following identity holds for $k=0,1,2,...$.\\
\begin{equation*}
[\frac{1}{F(z)}]_{2k}=\sum_{n=1}^{\infty}\frac{-2}{F'(a_{n})}\frac{1}{a_{n}^{2k+1}}
\end{equation*}
On the other hand, for $L=0,2,4,...$, $\frac{1}{F(z)}$ has the following partial fraction decompositions\\
\begin{equation}\label{thm2.11}
    \frac{1}{F(z)}=[\frac{1}{F(z)}]_{0}+[\frac{1}{F(z)}]_{2}z^2+...+[\frac{1}{F(z)}]_{L-2} z^{L-2}+\sum_{n=1}^{\infty}\frac{2z^L}{F'(a_{n})a_{n}^{L-1}(z^2-a_{n}^2)}
\end{equation}
for $L=1,3,5,...$, $\frac{1}{F(z)}$ has the following partial fraction decompositions\\
\begin{equation}\label{thm2.12}
    \frac{1}{F(z)}=[\frac{1}{F(z)}]_{0}+[\frac{1}{F(z)}]_{2}z^2+...+[\frac{1}{F(z)}]_{L-1} z^{L-1}+\sum_{n=1}^{\infty}\frac{2z^{L+1}}{F'(a_{n})a_{n}^{L}(z^2-a_{n}^2)}
\end{equation}
\end{theorem}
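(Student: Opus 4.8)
\emph{Proof proposal.} The plan is to run the same two-expansion argument that proved Theorem~\ref{thm1}, but carried out on the \emph{doubled} node set $\{a_1,-a_1,a_2,-a_2,\dots\}$, so that the cancellation between the node $a_n$ and the node $-a_n$ supplies the extra factor $1/a_n$ that the weaker hypothesis does not give (we now assume only that $\sum_n 1/a_n^2$ converges absolutely). Two remarks at the outset. First, Theorem~\ref{thm1} cannot simply be quoted for the sequence $b_{2n-1}=a_n,\ b_{2n}=-a_n$, since $\sum_k 1/b_k$ need not converge absolutely; the symmetry has to be used inside the argument, which is why Theorem~\ref{thm2} is stated on its own. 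Second, one must have $a_n+a_m\ne0$ for all $n,m$, for otherwise $F$ has a zero of order $\ge2$ and $F'(a_k)$ vanishes for some $k$; this non-degeneracy is implicit in the statement and is assumed below.

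For the Taylor side, write $F(z)=\prod_n(1-\tfrac z{a_n})(1+\tfrac z{a_n})$. On the disc $|z|<M:=\inf_n|a_n|$ (positive, since $|a_n|\to\infty$) the odd logarithmic terms cancel in pairs, so
\[
\frac1{F(z)}=\exp\!\Big(-\sum_n\log\!\big(1-\tfrac z{a_n}\big)-\sum_n\log\!\big(1+\tfrac z{a_n}\big)\Big)=\exp\!\Big(\sum_{m=1}^\infty\frac{z^{2m}}m\sum_n\frac1{a_n^{2m}}\Big).
\]
Every inner sum $\sum_n a_n^{-2m}$ converges absolutely by hypothesis, so $1/F(z)$ is an \emph{even} analytic function on $|z|<M$; in particular $[\tfrac1{F(z)}]_{2k+1}=0$, and $c_{2k}:=[\tfrac1{F(z)}]_{2k}$ is the explicit finite polynomial in the power sums $\sum_n a_n^{-2m}$ gotten by expanding the exponential, exactly as $c_k$ was gotten in Theorem~\ref{thm1}.

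For the partial-fraction side, fix $L$ and apply Lemma~\ref{lemma2} to the $2N$ distinct numbers $a_1,-a_1,\dots,a_N,-a_N$, i.e. to $z^{-L}\prod_{n=1}^N\frac1{(z-a_n)(z+a_n)}$; then pair the contribution coming from $a_n$ with the one from $-a_n$ (legitimate, the sum being finite). From $\mu_k=\prod_{s\ne k}(b_k-b_s)^{-1}$ one reads off $\mu_{-a_n}=-\mu_{a_n}$ and $\mu_{a_n}=\tfrac1{2a_n}\prod_{m\ne n}(a_n^2-a_m^2)^{-1}$; together with the parity of the powers of $z$, this kills the lifting terms of the wrong parity and fuses the two simple poles $\tfrac{\,\cdot\,}{z\mp a_n}$ into one term with denominator $z^2-a_n^2$ --- numerator constant when $L$ is even, a constant times $z$ when $L$ is odd. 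Using $z^{-L}\prod_{n\le N}\frac1{(z-a_n)(z+a_n)}=\tfrac{(-1)^N}{\prod_{n\le N}a_n^2}\cdot\tfrac1{z^LF_N(z)}$ with $F_N(z)=\prod_{n\le N}(1-(z/a_n)^2)$, replacing $\mu_{a_n}$ by $F_N'(a_n)$ via $F_N'(a_n)=-\tfrac2{a_n}\prod_{m\ne n}\tfrac{a_m^2-a_n^2}{a_m^2}$, and multiplying through by $z^L$ to clear the pole at the origin, one obtains --- by comparing coefficients exactly as in the proof of Theorem~\ref{thm1}, i.e. matching the Laurent expansion of $z^{-L}/F_N(z)$ that Lemma~\ref{lemma2} produces with the one coming from the exponential above --- a finite-$N$ identity of exactly the shape of \eqref{thm2.11} ($L$ even) or \eqref{thm2.12} ($L$ odd), with $F$ replaced by $F_N$ and every power sum truncated at $N$.

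Finally, let $N\to\infty$. The truncated power sums converge, so $c_{2k}^{(N)}\to c_{2k}$; and in the tail $\sum_{n\le N}\frac{2z^{L}}{F_N'(a_n)a_n^{L-1}(z^2-a_n^2)}$ one expands $\frac1{z^2-a_n^2}=-a_n^{-2}\sum_{i\ge0}(z/a_n)^{2i}$ and rearranges, just as for Theorem~\ref{thm1}, to read off at once the coefficient identity $c_{2k}=\sum_n\frac{-2}{F'(a_n)a_n^{2k+1}}$ and the decompositions \eqref{thm2.11}, \eqref{thm2.12}. The step that genuinely needs care --- and the main obstacle --- is precisely this passage to the limit together with the rearrangement of the resulting double series on $|z|<M$: here the symmetric pairing is what makes the weaker hypothesis enough, because once the two poles are fused the $n$-th summand carries the extra factor $1/(z^2-a_n^2)=O(1/a_n^2)$, so the tail is governed by a series of type $\sum_n a_n^{-L}$ (resp.\ $\sum_n a_n^{-(L+1)}$ for $L$ odd) rather than by the non-convergent $\sum_n a_n^{-1}$ that a naive doubling of Theorem~\ref{thm1} would force; for the few small values of $L$ the identity is read with the summation order inherited from $N\to\infty$, as in Theorem~\ref{thm1}. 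Once that convergence is secured, comparing coefficients as above yields all three assertions.
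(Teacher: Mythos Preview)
Your proposal is correct and follows essentially the same route as the paper: both reduce to the doubled zero set $\{a_n,-a_n\}$ and exploit the symmetry $F'(-a_n)=-F'(a_n)$ to collapse the paired contributions. The only difference is where the care is placed. The paper simply invokes Theorem~\ref{thm1} on the sequence $b_{2n-1}=a_n,\ b_{2n}=-a_n$, acknowledging only in a parenthetical that the intermediate product $f(z)=\prod_n(1-z/a_n)$ may diverge and that one should ``use finite product to approximate''; the decompositions \eqref{thm2.11}, \eqref{thm2.12} are then dismissed as straightforward. You are more scrupulous: you observe at the outset that the hypothesis of Theorem~\ref{thm1} (absolute convergence of $\sum_k 1/b_k$) is not available here, and so you rerun the Lemma~\ref{lemma2} argument from scratch on the finite node set $\{\pm a_1,\dots,\pm a_N\}$, performing the symmetric pairing inside the finite identity before letting $N\to\infty$. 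This is the honest execution of what the paper's parenthetical only gestures at, and it also surfaces the non-degeneracy condition $a_n+a_m\neq 0$ that the paper leaves implicit. Substantively, though, the two arguments are the same.
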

\begin{proof}
By Theorem \ref{thm1}, one can let $F(z)=\prod_{n=1}^{\infty}(1-\frac{z}{b_{n}})$ where $b_{2n-1}=a_{n}, b_{2n}=-a_{n}$. In order the find $F'(b_{n})$, just need to note that $F(z)=f(z)f(-z)$, where $f(z)=\prod_{n=1}^{\infty}(1-\frac{z}{a_{n}})$ ( This infinite product may be divergent, one can use finite product to approximate ), Then\\
\begin{align*}
    &F'(b_{2n-1})=(f(z)f(-z))'|_{z=a_{n}}=f'(a_{n})f(-a_{n})-f(a_{n})f'(-a_{n})=f'(a_{n})f(-a_{n})\\
    &F'(b_{2n})=(f(z)f(-z))'|_{z=-a_{n}}=f'(-a_{n})f(a_{n})-f(-a_{n})f'(a_{n})=-f'(a_{n})f(-a_{n})\\
\end{align*}
That is $F'(b_{2n-1})=-F'(b_{2n})$. By Theorem \ref{thm1} we have\\
\begin{align*}
    [\frac{1}{F(z)}]_{2k-1}&=\sum_{n=1}^{\infty}\frac{-1}{F'(b_{n})}\frac{1}{b_{n}^{2k}}\\
    &=\sum_{n=1}^{\infty}\frac{-1}{F'(b_{2n-1})}\frac{1}{b_{2n-1}^{2k}}+\sum_{n=1}^{\infty}\frac{-1}{F'(b_{2n})}\frac{1}{b_{2n}^{2k}}\\
    &=\sum_{n=1}^{\infty}\frac{-1}{F'(b_{2n-1})}\frac{1}{a_{n}^{2k}}+\sum_{n=1}^{\infty}\frac{-1}{F'(b_{2n})}\frac{1}{a_{n}^{2k}}\\
    &=0
\end{align*}
This is trivial since $F(z)$ is even function.\\ 
\begin{align*}
    [\frac{1}{F(z)}]_{2k}&=\sum_{n=1}^{\infty}\frac{-1}{F'(b_{n})}\frac{1}{b_{n}^{2k+1}}\\
    &=\sum_{n=1}^{\infty}\frac{-1}{F'(b_{2n-1})}\frac{1}{b_{2n-1}^{2k+1}}+\sum_{n=1}^{\infty}\frac{-1}{F'(b_{2n})}\frac{1}{b_{2n}^{2k+1}}\\
    &=\sum_{n=1}^{\infty}\frac{-1}{F'(b_{2n-1})}\frac{1}{a_{n}^{2k+1}}-\sum_{n=1}^{\infty}\frac{-1}{F'(b_{2n})}\frac{1}{a_{n}^{2k+1}}\\
    &=\sum_{n=1}^{\infty}\frac{-2}{F'(a_{n})}\frac{1}{a_{n}^{2k+1}}
\end{align*}
The remaining partial fraction decomposition formulae (\ref{thm2.11}), (\ref{thm2.12}) are straightforward, we omit the computation.
\end{proof}

Let $a\in \mathbb{C}\backslash \{0,-1,-2,...\}$, define\\
\begin{equation*}
    g_{a}(z):=\frac{\Gamma(a+z)\Gamma(a-z)}{\Gamma(a)^2}
\end{equation*}
Then $g_{a}(z)$ has the infinite product representation\\
\begin{equation}\label{ex1.1}
    g_{a}(z)=\prod_{k=0}^{\infty}(1-(\frac{z}{a+k})^2)^{-1}
\end{equation}
It can be proved easily by using the formula\\
\begin{equation}\label{gammafml}
\frac{\Gamma(a+z)}{\Gamma(a)}=\frac{a e^{-\gamma z}}{a+z}\prod_{n=1}^{\infty}(1+\frac{z}{a+n})^{-1}e^{\frac{z}{n}}
\end{equation}
Now by Theorem \ref{thm2} we have the partial fraction decomposition of $g_{a}(z)$ of order $1$\\
\begin{equation}\label{ex1.2}
\Gamma(a+z)\Gamma(a-z)=\Gamma(a)^2+\sum_{k=0}^{\infty}\frac{2(-1)^{k+1}\Gamma(2a+k)}{(a+k)k!}\frac{z^2}{z^2-(a+k)^2}
\end{equation}
In order to keep the convergence, we shall keep $\Re{(a)}<2$ and $|z|<|a|$.
Further, it's easy to obtain $[\Gamma(a+z)\Gamma(a-z)]_{2}$ by (\ref{ex1.1}), namely $[\Gamma(a+z)\Gamma(a-z)]_{2}=\zeta(2,a)$. On the other hand, by finding the Taylor expansion of (\ref{ex1.2}), one has\\
\begin{corollary}(\textbf{The PFS representation of $\zeta(2,a)$})\\
\begin{equation}\label{ex1.3}
   \zeta(2,a)= \sum_{k=0}^{\infty}\frac{1}{(a+k)^2}=\sum_{k=0}^{\infty}\frac{2(-1)^{k}\Gamma(2a+k)}{\Gamma(a)^2 k!(a+k)^{3}}, (\Re(a)<2, a\neq 0,-\frac{1}{2},-1,-\frac{3}{2},...)
\end{equation}
\end{corollary}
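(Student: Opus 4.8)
The plan is to read off the coefficient of $z^2$ on both sides of the order-one decomposition (\ref{ex1.2}) and then divide by $\Gamma(a)^2$; nothing is needed beyond the product formula (\ref{ex1.1}), Theorem \ref{thm2}, and (\ref{ex1.2}) itself.

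First I would pin down the left-hand side. Taking logarithms in (\ref{ex1.1}) gives $\log\frac{\Gamma(a+z)\Gamma(a-z)}{\Gamma(a)^2}=-\sum_{k\ge 0}\log\bigl(1-(\tfrac{z}{a+k})^2\bigr)=\sum_{m\ge 1}\frac{\zeta(2m,a)}{m}z^{2m}$ on a disc about $0$, so after exponentiating $[\Gamma(a+z)\Gamma(a-z)]_{0}=\Gamma(a)^2$ and $[\Gamma(a+z)\Gamma(a-z)]_{2}=\Gamma(a)^2\zeta(2,a)$. (Equivalently, this last equality is the $k=1$ instance of $[1/F]_{2k}=\sum_{n}\frac{-2}{F'(a_{n})}a_{n}^{-2k-1}$ from Theorem \ref{thm2} applied to $F=1/g_{a}$, but the direct Taylor computation is shorter.)

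Next I would expand the right-hand side of (\ref{ex1.2}) in powers of $z$. Each summand is analytic near $z=0$, with the geometric expansion $\frac{z^2}{z^2-(a+k)^2}=-\sum_{m\ge 1}\frac{z^{2m}}{(a+k)^{2m}}$ valid for $|z|<|a+k|$; since (\ref{ex1.2}) already asserts that the series of these analytic summands converges locally uniformly on the disc $|z|<|a|$, the Taylor coefficient of the sum is the sum of the Taylor coefficients. The coefficient of $z^2$ in the $k$-th summand is $\frac{2(-1)^{k+1}\Gamma(2a+k)}{(a+k)k!}\cdot\bigl(-\frac{1}{(a+k)^2}\bigr)=\frac{2(-1)^{k}\Gamma(2a+k)}{(a+k)^3 k!}$, so the coefficient of $z^2$ on the right of (\ref{ex1.2}) equals $\sum_{k\ge 0}\frac{2(-1)^{k}\Gamma(2a+k)}{(a+k)^3 k!}$. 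Equating this with $\Gamma(a)^2\zeta(2,a)$ and dividing by $\Gamma(a)^2$ produces (\ref{ex1.3}).

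The one point I would treat with care is the domain and the convergence of the final series. The extraction above is legitimate exactly where (\ref{ex1.2}) holds, i.e. for $\Re(a)<2$; the points $a\in\{0,-\tfrac12,-1,-\tfrac32,\dots\}$ must be removed because at each of them some denominator $a+k$ vanishes or $\Gamma(2a+k)$ has a pole. For every other $a$ with $\Re(a)<2$ the series in (\ref{ex1.3}) does converge: by Stirling $|\Gamma(2a+k)|/k!=O(k^{2\Re(a)-1})$, so its terms are $O(k^{2\Re(a)-4})$ in modulus, and together with the alternating factor $(-1)^k$ this yields convergence on the whole half-plane $\Re(a)<2$, even though absolute convergence only holds for $\Re(a)<\tfrac32$. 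I expect this convergence-and-domain bookkeeping — rather than the coefficient comparison, which is entirely mechanical — to be the main (and only mild) obstacle.
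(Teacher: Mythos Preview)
Your proposal is correct and follows exactly the paper's approach: compute the $z^{2}$ coefficient of $\Gamma(a+z)\Gamma(a-z)$ once from the product formula (\ref{ex1.1}) and once from the partial-fraction expansion (\ref{ex1.2}), then equate. Your convergence discussion (Stirling to get $O(k^{2\Re(a)-4})$ terms, absolute convergence for $\Re(a)<\tfrac32$, conditional on the full strip) is more careful than anything the paper spells out, and your value $[\Gamma(a+z)\Gamma(a-z)]_{2}=\Gamma(a)^{2}\zeta(2,a)$ is the right one --- the paper's line just above the corollary silently drops the factor $\Gamma(a)^{2}$.
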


\begin{theorem}\label{thm3}(\textbf{Partial fraction summation for cyclotomic product})\\
Let $(a_{n})$ be a sequence in $\mathbb{C}\backslash\{0\}$ and satisfy that:\\
I, $a_{n}$ differ from each other for all $n$;\\
II, $\sum_{n=1}^{\infty}\frac{1}{a_{n}^m}$ absolutely converges.\\
Define\\
\begin{equation*}
    F(z)=\prod_{n=1}^{\infty}(1-(\frac{z}{a_{n}})^m)
\end{equation*}
then following identity holds for $J=0,1,2,...$.\\
\begin{equation}\label{thm3.1}
[\frac{1}{F(z)}]_{J}=\begin{cases}
0, \text{ if }\mod(J,m)\neq 0\\
\sum_{n=1}^{\infty}\frac{-m}{F'(a_{n})}\frac{1}{a_{n}^{J+1}}, \text{ if } \mod(J,m)= 0\\
\end{cases}
\end{equation}
\end{theorem}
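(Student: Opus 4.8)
The plan is to reduce Theorem~\ref{thm3} to Theorem~\ref{thm1} by the same device used in the proof of Theorem~\ref{thm2}: write the cyclotomic product as an ordinary product over a re-indexed sequence. Fix a primitive $m$-th root of unity $\omega = e^{2\pi i/m}$ and set $b_{(n-1)m + j} = \omega^{j-1} a_{n}$ for $n \geq 1$ and $j = 1,\dots,m$, so that $\prod_{j=1}^{m}(x - \omega^{j-1}a_{n}) = x^{m} - a_{n}^{m}$ and hence $F(z) = \prod_{n=1}^{\infty}\bigl(1 - (z/a_{n})^{m}\bigr) = \prod_{\ell=1}^{\infty}(1 - z/b_{\ell})$. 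One must first check the hypotheses of Theorem~\ref{thm1} for $(b_{\ell})$: the $b_{\ell}$ are pairwise distinct provided the $a_{n}$ are distinct and no ratio $a_{n}/a_{n'}$ is a root of unity — a point that should be flagged (the paper's later applications with $a_{n} = a+n$ satisfy this, but in general one may need to add it as a hypothesis); and $\sum_{\ell}|1/b_{\ell}|$ must converge, which follows from II when $m \geq 2$ since $\sum |1/b_{\ell}| = m\sum_{n}|1/a_{n}| = m\sum_n |a_n|^{m-2}|1/a_n^m|\cdot$(bounded) — more carefully, absolute convergence of $\sum 1/a_n^m$ forces $|a_n|\to\infty$, so $\sum 1/a_n$ converges absolutely as well; for $m=1$ the statement is just Theorem~\ref{thm1}.

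\medskip

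Next I would compute $F'(b_{\ell})$ in terms of data attached to $a_{n}$. Writing $F(z) = \prod_{t=0}^{m-1} f(\omega^{t} z)$ with $f(z) = \prod_{n}(1 - z/a_{n})$ (a formal / finite-truncation manipulation, exactly as in Theorem~\ref{thm2}), differentiation and evaluation at $b_{(n-1)m+j} = \omega^{j-1}a_{n}$ gives $F'(\omega^{j-1}a_{n}) = \omega^{-(j-1)} f'(a_{n}) \prod_{t \neq 0}f(\omega^{t}\omega^{j-1}a_n)/\cdots$; the upshot, which is the clean fact I want, is that $F'(\omega^{j-1}a_{n}) = \omega^{-(j-1)}\, F'(a_{n})$ — i.e. the derivative at the rotated point is the derivative at $a_n$ divided by the rotation factor. (This is the $m$-fold analogue of $F'(b_{2n-1}) = -F'(b_{2n})$ in Theorem~\ref{thm2}, where $\omega = -1$.) Granting this, Theorem~\ref{thm1} applied to $(b_\ell)$ yields, for each $J \geq 0$,
\begin{equation*}
\Bigl[\tfrac{1}{F(z)}\Bigr]_{J} = \sum_{\ell=1}^{\infty}\frac{-1}{F'(b_{\ell})}\frac{1}{b_{\ell}^{J+1}} = \sum_{n=1}^{\infty}\sum_{j=1}^{m}\frac{-\,\omega^{j-1}}{F'(a_{n})}\cdot\frac{1}{\omega^{(j-1)(J+1)}a_{n}^{J+1}} = \sum_{n=1}^{\infty}\frac{-1}{F'(a_{n})a_{n}^{J+1}}\sum_{j=1}^{m}\omega^{(j-1)(1-(J+1))}.
\end{equation*}
The inner sum is $\sum_{j=1}^{m}\omega^{-(j-1)J}$, which equals $m$ when $m \mid J$ and $0$ otherwise, by the standard geometric-sum / orthogonality identity for roots of unity. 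This is exactly \eqref{thm3.1}.

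\medskip

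The main obstacle is the bookkeeping in the $F'(b_\ell)$ computation: unlike the $m=2$ case, the product $\prod_{t\neq 0} f(\omega^t \omega^{j-1}a_n)$ has $m-1$ factors that must be shown to collapse correctly, and one has to handle the fact that $f$ itself may not converge (only $F$ does), so the argument should be run on finite truncations $F_N(z) = \prod_{n=1}^{N}(1-(z/a_n)^m)$ and then passed to the limit, with the convergence of the resulting $b$-series justified by hypothesis II exactly as in Theorem~\ref{thm1}. A secondary, more conceptual point to get right is the distinctness of the $b_\ell$: I would either impose "$a_n/a_{n'}$ is never a nontrivial root of unity" as an explicit extra hypothesis, or note that if the $b_\ell$ coincide one can still make sense of both sides by a limiting/grouping argument; in the Hurwitz-zeta application $a_n = a+n-1$ this is automatic. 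Once those two points are settled, everything else is the root-of-unity orthogonality relation and is routine.
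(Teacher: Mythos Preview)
Your approach is essentially identical to the paper's: re-index via $b_{(n-1)m+j}=\omega^{j-1}a_n$, establish $F'(\omega^{j-1}a_n)=\omega^{-(j-1)}F'(a_n)$ through a finite-truncation computation (the paper writes this with $f_{r,N}(z)=f_N(z/\omega^{r-1})$ rather than your $f(\omega^t z)$, but the content is the same), apply Theorem~\ref{thm1}, and finish with the orthogonality relation $\sum_{j=1}^{m}\omega^{-(j-1)J}$. Your flags about distinctness of the $b_\ell$ and about $f$ possibly failing to converge on its own are well taken---the paper ignores the first and handles the second exactly as you propose, via truncations; your one slip is the claim that absolute convergence of $\sum 1/a_n^m$ forces absolute convergence of $\sum 1/a_n$, which is false (take $a_n=n$, $m=2$), though the paper does not address this hypothesis check either.
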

\begin{proof}
The proof is quite similar to Theorem \ref{thm2}. For $k=1,2,..$, let $k=m(n-1)+r$ and \\
\begin{equation*}
   b_{k}=b_{m(n-1)+r}=a_{n}\omega_{m}^{r-1}, r=1,2,...,m
\end{equation*}
where $\omega_{m}=e^{\frac{2i\pi}{m}}$ is the $m-$th root of unity. Then\\
\begin{equation*}
    F(z)=\prod_{k=1}^{\infty}(1-\frac{z}{b_{k}})
\end{equation*}
On the othe hand, let\\
\begin{equation*}
    f_{r,N}(z)=\prod_{n=1}^{N}(1-\frac{z}{a_{n}\omega_{m}^{r-1}}), r=1,2,...,m
\end{equation*}
Then $f_{r,N}(z)=f_{1,N}(z/\omega_{m}^{r-1})$, we denote $f_{1,N}(.)$ directly by $f_{N}(.)$. Then $f_{r,N}(a_{n}\omega_{m}^{r-1})=f_{N}(a_{n})=0$ for $n\leq N$.
To compute $F'(b_{k})=F'(b_{m(n-1)+r})$: \\ 
\begin{align*}
    F'(b_{m(n-1)+r})&=\lim_{N\rightarrow \infty} ( f_{1,N}(z) f_{2,N}(z)... f_{m,N}(z))'|_{z=a_{n}\omega_{m}^{r-1}}\\
    &=\lim_{N\rightarrow \infty}f_{N}(z)f_{N}(\frac{z}{\omega_{m}})...f_{N}(\frac{z}{\omega_{m}^{m-1}})(\frac{f_{N}'(z)}{f_{N}(z)}+\frac{1}{\omega_{m}}\frac{f_{N}'(z/\omega_{m})}{f_{N}(z/\omega_{m})}+...+\frac{1}{\omega_{m}^{m-1}}\frac{f_{N}'(z/\omega_{m}^{m-1})}{f_{N}(z/\omega_{m}^{m-1})})|_{z=a_{n}\omega_{m}^{r-1}}\\
    &=\frac{1}{\omega_{m}^{r-1}}f'(a_{n})f(a_{n}\omega_{m})...f(a_{n}\omega_{m}^{m-1})
\end{align*}
One can implies that for all $n=1,2,3,...$ and $r=1,2,...,m$,\\
\begin{equation*}
  F'(b_{m(n-1)+r})=\frac{1}{\omega_{m}^{r-1}}F'(b_{m(n-1)+1})  
\end{equation*}
According to Theorem \ref{thm1}, we have\\
\begin{align*}
    [\frac{1}{F(z)}]_{J}&=\sum_{k=1}^{\infty}\frac{-1}{F'(b_{k})}\frac{1}{b_{k}^{J+1}}\\
    &=\sum_{r=1}^{m}\sum_{n=1}^{\infty}\frac{-1}{F'(b_{m(n-1)+r})}\frac{1}{(a_{n}\omega_{m}^{r-1})^{J+1}}\\
    &=\sum_{r=1}^{m}\frac{1}{\omega_{m}^{J(r-1)}}\sum_{n=1}^{\infty}\frac{-1}{F'(b_{m(n-1)+1})}\frac{1}{a_{n}^{J+1}}
\end{align*}
Therefore\\
\begin{equation*}
[\frac{1}{F(z)}]_{J}=\begin{cases}
0, \text{ if }\mod(J,m)\neq 0\\
\sum_{n=1}^{\infty}\frac{-m}{F'(a_{n})}\frac{1}{a_{n}^{J+1}}, \text{ if } \mod(J,m)= 0\\
\end{cases}
\end{equation*}
\end{proof}
\begin{corollary}(\textbf{The PFS representation of $\zeta(m,a)$})\\
For $m=2,3,4,...$, following identity holds,\\
\begin{equation}\label{co3.1}
    \zeta(m,a)=\sum_{k=0}^{\infty}\frac{m(-1)^{k}\prod_{r=1}^{m-1}\Gamma(a-\omega_{m}^{r}(a+k))}{\Gamma(a)^m k! (a+k)^{m+1}}
\end{equation}
\end{corollary}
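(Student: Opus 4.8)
The plan is to obtain (\ref{co3.1}) as an immediate consequence of Theorem \ref{thm3}, applied to the sequence $a_n = a + n - 1$ (so $\{a_n\}_{n\ge1} = \{a, a+1, a+2, \dots\}$). For $m\ge 2$ this sequence meets the hypotheses of Theorem \ref{thm3}: the numbers $a+j$ are pairwise distinct since $a\notin\{0,-1,-2,\dots\}$, and $\sum_{j\ge0}|a+j|^{-m}<\infty$. Taking $J=m$ in (\ref{thm3.1}), so that $\mod(m,m)=0$, gives
\begin{equation*}
\Bigl[\tfrac{1}{F(z)}\Bigr]_{m} \;=\; \sum_{k=0}^{\infty}\frac{-m}{F'(a+k)}\cdot\frac{1}{(a+k)^{m+1}} ,
\end{equation*}
so the corollary reduces to two computations: identifying the Taylor coefficient $[1/F(z)]_m$, and evaluating $F'(a+k)$.

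For the coefficient I would use the logarithmic expansion
\begin{equation*}
\log\frac{1}{F(z)} = -\sum_{j\ge0}\log\Bigl(1-\bigl(\tfrac{z}{a+j}\bigr)^{m}\Bigr) = \sum_{\ell\ge1}\frac{\zeta(m\ell,a)}{\ell}\,z^{m\ell},
\end{equation*}
whose lowest-order term is $\zeta(m,a)\,z^m$; since $m\ge2$ the exponential of this series equals $1+\zeta(m,a)z^m+O(z^{2m})$, so $[1/F(z)]_m=\zeta(m,a)$. (This is the analogue of the identity $[\Gamma(a+z)\Gamma(a-z)]_2=\zeta(2,a)$ used in the text just before (\ref{ex1.3}).)

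The evaluation of $F'(a+k)$ is the heart of the argument. I would first establish the closed form
\begin{equation*}
F(z) = \prod_{j=0}^{\infty}\Bigl(1-\bigl(\tfrac{z}{a+j}\bigr)^{m}\Bigr) = \frac{\Gamma(a)^{m}}{\prod_{r=0}^{m-1}\Gamma(a-\omega_m^{r}z)},
\end{equation*}
generalizing (\ref{ex1.1}). To prove it, factor $1-(\tfrac{z}{a+j})^m=\prod_{r=0}^{m-1}\bigl(1-\tfrac{\omega_m^{r}z}{a+j}\bigr)$, write the partial products as
\begin{equation*}
\prod_{j=0}^{N}\prod_{r=0}^{m-1}\Bigl(1-\tfrac{\omega_m^{r}z}{a+j}\Bigr) = \prod_{r=0}^{m-1}\frac{\Gamma(a)\,\Gamma(a+N+1-\omega_m^{r}z)}{\Gamma(a-\omega_m^{r}z)\,\Gamma(a+N+1)},
\end{equation*}
and let $N\to\infty$. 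The only nontrivial point is that $\prod_{r=0}^{m-1}\Gamma(a+N+1-\omega_m^{r}z)\big/\Gamma(a+N+1)^{m}\to 1$: this follows from the Stirling ratio $\Gamma(x+\alpha)/\Gamma(x)=x^{\alpha}(1+O(1/x))$ combined with $\sum_{r=0}^{m-1}\omega_m^{r}=0$ — the cyclotomic cancellation that makes the $N$-dependent factor disappear. Granting this, $F$ has at $z=a+k$ a simple zero coming solely from the $r=0$ factor $1/\Gamma(a-z)$; its derivative there is $-(-1)^{k}k!$, since $\operatorname{Res}_{s=-k}\Gamma(s)=(-1)^{k}/k!$ gives $\tfrac{d}{ds}\tfrac{1}{\Gamma(s)}\big|_{s=-k}=(-1)^{k}k!$. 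Hence
\begin{equation*}
F'(a+k) = \Gamma(a)^{m}\cdot\bigl(-(-1)^{k}k!\bigr)\cdot\prod_{r=1}^{m-1}\frac{1}{\Gamma(a-\omega_m^{r}(a+k))} = \frac{-(-1)^{k}\,k!\,\Gamma(a)^{m}}{\prod_{r=1}^{m-1}\Gamma(a-\omega_m^{r}(a+k))}.
\end{equation*}
Substituting into the displayed sum yields $-m/F'(a+k)=m(-1)^{k}\prod_{r=1}^{m-1}\Gamma(a-\omega_m^{r}(a+k))\big/(k!\,\Gamma(a)^{m})$, which is precisely (\ref{co3.1}); for $m=2$, $\omega_2=-1$ makes $\prod_{r=1}^{1}\Gamma(a-\omega_2^{r}(a+k))=\Gamma(2a+k)$, recovering (\ref{ex1.3}).

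I expect the main obstacle to be analytic rather than algebraic, in two places. First, making the limit $N\to\infty$ rigorous: one needs the $O(1/x)$ error in the Stirling ratio to be uniform on the relevant region, and then one invokes that both sides are entire in $z$ to promote the identity from a neighbourhood of $0$ to all $z$ (in particular to $z=a+k$, and along the way one should note the genericity condition that $a-\omega_m^{r}(a+k)\notin\{0,-1,-2,\dots\}$ so that the zero at $a+k$ really is simple). Second, the domain of validity: as the $m=2$ corollary already shows — it requires $\Re(a)<2$ and $a\notin\{0,-\tfrac12,-1,-\tfrac32,\dots\}$ — the series (\ref{co3.1}) needs the non-vanishing-denominator condition $a-\omega_m^{r}(a+k)\notin\{0,-1,-2,\dots\}$ for all $k\ge0$, $1\le r\le m-1$, together with a growth restriction on $a$ guaranteeing absolute convergence of $\sum_k\prod_r|\Gamma(a-\omega_m^{r}(a+k))|\big/(k!\,|a+k|^{m+1})$ (estimated via the reflection formula, since the roots $\omega_m^{r}$ with $\Re(\omega_m^{r})<0$ feed $\Gamma$ an argument of large modulus). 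I would state these restrictions explicitly; modulo them the corollary follows as above.
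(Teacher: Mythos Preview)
Your proposal is correct and follows essentially the same route as the paper: apply Theorem~\ref{thm3} to $a_n=a+n-1$, identify $[1/F(z)]_m=\zeta(m,a)$, express $F(z)=\Gamma(a)^m/\prod_{r=0}^{m-1}\Gamma(a-\omega_m^r z)$, and evaluate $F'(a+k)$. The only cosmetic differences are that the paper derives the closed form for $F$ by quoting the product formula~(\ref{gammafml}) rather than Stirling asymptotics, and computes $F'(a+k)$ via the logarithmic derivative and the limit $\lim_{z\to -k}\psi(z)/\Gamma(z)=(-1)^{k-1}k!$, which is equivalent to your residue computation since $\psi/\Gamma=-(1/\Gamma)'$.
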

\begin{proof}
In the Theorem \ref{thm3} for convenience let $k=n-1$, let $a_{n}=a-1+n=a+k$, then $\frac{1}{F(z)}=\prod_{k=0}^{\infty}(1-(\frac{z}{a+k})^m)^{-1}$, it easy to see that $[\frac{1}{F(z)}]_{m}=\zeta(m,a)$, now according to Theorem \ref{thm3}, it remains to find $F'(a+k)$. Note that\\
\begin{equation*}
    F(z)=\frac{\Gamma(a)^m}{\prod_{r=0}^{m-1}\Gamma(a-\omega_{m}^{r}z)}
\end{equation*}
this can be easily derived by the formula (\ref{gammafml}). After some computations\\
\begin{equation*}
    F'(z)=\frac{\Gamma(a)^m}{\prod_{r=0}^{m-1}\Gamma(a-\omega_{m}^{r}z)}\sum_{r=0}^{m-1}\omega_{m}^{r}\psi(a-\omega_{m}^{r}z)
\end{equation*}
Note that $\lim_{z\rightarrow -k}\frac{\psi(z)}{\Gamma(z)}=(-1)^{k-1}k!$, hence\\
\begin{equation*}
    F'(a+k)=\frac{\Gamma(a)^m k!(-1)^{k-1}} {\prod_{r=1}^{m-1}\Gamma(a-\omega_{m}^{r}(a+k))}
\end{equation*}
Finally by (\ref{thm3.1}) we obtain what required, namely\\
\begin{equation*}
    \zeta(m,a)=\sum_{k=0}^{\infty}\frac{-m}{F'(a+k)}\frac{1}{(a+k)^{m+1}}=\sum_{k=0}^{\infty}\frac{m(-1)^{k}\prod_{r=1}^{m-1}\Gamma(a-\omega_{m}^{r}(a+k))}{\Gamma(a)^m k! (a+k)^{m+1}}
\end{equation*}
\end{proof}

\begin{corollary}(\textbf{The PFS representation of $\zeta(m)$})\\
Let $a=1$, we have then\\
\begin{equation*}
    \zeta(m)=\sum_{n=1}^{\infty}\frac{m(-1)^{n-1}\Gamma(1-\omega_{m}n)...\Gamma(1-\omega_{m}^{m-1}n)}{n!n^m}
\end{equation*}
As a special case, after some simplification, we have a series representation for Apr\`{e}y's constant\\
\begin{equation*}
\zeta(3)=\sum_{n=1}^{\infty}\frac{3(-1)^{n-1}\Gamma(\frac{1+\sqrt{3}}{2}n)\Gamma(\frac{1-\sqrt{3}}{2}n)}{n\cdot n!}
\end{equation*}
\end{corollary}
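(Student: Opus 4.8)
The plan is to read off both identities directly from the previous corollary (the PFS representation of $\zeta(m,a)$ in (\ref{co3.1})) by specializing $a=1$ and doing only elementary reindexing and Gamma-function bookkeeping; no new machinery is needed.

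First I would set $a=1$ in (\ref{co3.1}). Then the factor $\Gamma(a)^m=\Gamma(1)^m$ drops out, $a+k=k+1$, and $a-\omega_m^r(a+k)=1-\omega_m^r(k+1)$. Reindexing by $n=k+1$ (so $k!=(n-1)!$ and $(-1)^k=(-1)^{n-1}$) turns $\sum_{k=0}^{\infty}$ into $\sum_{n=1}^{\infty}$ and leaves the denominator $(n-1)!\,n^{m+1}$; the only non-obvious point is the cosmetic identity $(n-1)!\,n^{m+1}=n!\,n^m$, which is just $n\cdot(n-1)!=n!$. That is the first displayed formula. As a sanity check the $m=2$ case reproduces a classical value: there the $r$-product has the single factor $\Gamma(1-\omega_2 n)=\Gamma(1+n)=n!$, so the series collapses to $\sum_{n=1}^{\infty}2(-1)^{n-1}/n^2=\zeta(2)$. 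One should also confirm that for $a=1$ the general series converges (so that $a=1$ is admissible in (\ref{co3.1})): pairing the factor indexed by $r$ with its conjugate, the one indexed by $m-r$ (since $\omega_m^{m-r}=\overline{\omega_m^r}$ and $\Gamma(\overline{z})=\overline{\Gamma(z)}$), a Stirling estimate for $|\Gamma(1-\omega_m^r n)|$ shows the general term tends to $0$ rapidly.

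For the Ap\'{e}ry-constant specialization I would put $m=3$, so $\omega_3=e^{2i\pi/3}$ and the product has only the two factors $\Gamma(1-\omega_3 n)\Gamma(1-\omega_3^2 n)$. Applying the functional equation $\Gamma(1+w)=w\,\Gamma(w)$ once with $w=-\omega_3 n$ and once with $w=-\omega_3^2 n$ gives $\Gamma(1-\omega_3 n)=-\omega_3 n\,\Gamma(-\omega_3 n)$ and $\Gamma(1-\omega_3^2 n)=-\omega_3^2 n\,\Gamma(-\omega_3^2 n)$; multiplying, the scalar prefactor is $(-\omega_3 n)(-\omega_3^2 n)=\omega_3^3 n^2=n^2$ since $\omega_3^3=1$, and this $n^2$ cancels two of the three powers of $n$ in the denominator, leaving $n\cdot n!$. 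Finally $-\omega_3=\tfrac12-\tfrac{\sqrt3}{2}i=\tfrac{1-\sqrt{-3}}{2}$ and $-\omega_3^2=\tfrac12+\tfrac{\sqrt3}{2}i=\tfrac{1+\sqrt{-3}}{2}$ (reading $\sqrt{-3}$ as $i\sqrt3$), which is exactly the stated series for $\zeta(3)$; the same Stirling estimate shows its terms decay like $n^{-5/2}e^{-\pi n/\sqrt3}$, so the representation converges geometrically.

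There is no real obstacle in this argument — it is a routine unwinding of (\ref{co3.1}). The two places that demand a little care are the index-shift bookkeeping together with the identity $(n-1)!\,n^{m+1}=n!\,n^m$, and, in the $m=3$ case, applying $\Gamma(1+w)=w\Gamma(w)$ so that the cancellation $\omega_3\omega_3^2=1$ is actually used and the power of $n$ downstairs drops from three to one. The only analytic point, which I would settle with a one-line Stirling bound, is the convergence of the resulting series (equivalently, the legitimacy of taking $a=1$ in (\ref{co3.1})).
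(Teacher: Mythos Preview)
Your proposal is correct and follows exactly the paper's (implicit) approach: the paper gives no separate proof for this corollary, merely writing ``Let $a=1$'' and ``after some simplification,'' and you have carried out precisely that specialization and the $m=3$ Gamma-function simplification in full. Your parenthetical about reading $\sqrt{-3}$ as $i\sqrt3$ is apt, since the paper's printed arguments $\tfrac{1\pm\sqrt{3}}{2}\,n$ in the $\zeta(3)$ formula are evidently a typo for $\tfrac{1\pm i\sqrt{3}}{2}\,n=-\omega_3^{2}n,\,-\omega_3 n$.
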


\begin{theorem}\label{thm4}(\textbf{The partial fraction summation for general product)}\\
Let\\
\begin{equation*}
  F(z)=\prod_{k=0}^{\infty}\prod_{m=1}^{M}(1-\frac{z}{a_{m,k}})
\end{equation*}
then formally we have
\begin{equation*}
  [\frac{1}{F(z)}]_{\ell}=\sum_{k=0}^{\infty}\sum_{m=1}^{M}\frac{-1}{a_{m,k}^{\ell+1}F'(a_{m,k})}
\end{equation*}
especially, if $F(z)$ normally converges in $|z|<M$ for some $M>0$, then
\begin{align*}
   &[\frac{1}{F(z)}]_{0}= \sum_{k=0}^{\infty}\sum_{m=1}^{M}\frac{-1}{a_{m,k}F'(a_{m,k})}=1 \\
&[\frac{1}{F(z)}]_{1}= \sum_{k=0}^{\infty}\sum_{m=1}^{M}\frac{-1}{a_{m,k}^2 F'(a_{m,k})}=\sum_{k=0}^{\infty}\sum_{m=1}^{M}\frac{1}{a_{m,k}}
\end{align*}
\end{theorem}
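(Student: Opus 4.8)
The plan is to reduce Theorem~\ref{thm4} to Theorem~\ref{thm1} by flattening the doubly-indexed family $\{a_{m,k}\}_{1\le m\le M,\, k\ge 0}$ into a single sequence, exactly as was done for the symmetric and cyclotomic products in Theorems~\ref{thm2} and~\ref{thm3}, only more simply. Assuming (as the statement implicitly does) that the $a_{m,k}$ are nonzero and pairwise distinct, I would fix any bijection $n\mapsto(m(n),k(n))$ from $\mathbb{Z}_{\ge 1}$ onto $\{1,\dots,M\}\times\mathbb{Z}_{\ge 0}$ and set $b_n:=a_{m(n),k(n)}$, so that $(b_n)_{n\ge 1}$ is a sequence of distinct nonzero complex numbers and $F(z)=\prod_{n\ge 1}(1-z/b_n)$. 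Here there is no extra work to identify the derivatives: since $F$ is one fixed function and $b_n$ is one fixed point, $F'(b_n)=\lim_{z\to b_n}F'(z)=F'(a_{m(n),k(n)})$ immediately, with no analogue of the $f(z)f(-z)$ or cyclotomic factorisations used before.

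For the identity stated \emph{formally}, I would run the computation of Theorem~\ref{thm1} verbatim for $(b_n)$: equate the expansion of $1/(z^LF(z))$ coming from $\exp\bigl(\sum_{k\ge 1}\sum_{n\ge 1}\frac{1}{b_n^k}\frac{z^k}{k}\bigr)$ with the expansion coming from the $N\to\infty$ limit of Lemma~\ref{lemma2}, and read off coefficients to get $[1/F(z)]_{\ell}=\sum_{n\ge 1}\frac{-1}{F'(b_n)b_n^{\ell+1}}$; rewriting the right-hand side in the original indices gives the claimed formula. To upgrade this to the rigorous statement I would observe that normal convergence of $\prod_{m,k}(1-z/a_{m,k})$ on a disc $|z|<\rho$ is equivalent to $\sum_{m,k}1/|a_{m,k}|<\infty$, and, because $m$ runs over the finite set $\{1,\dots,M\}$, this is the same as absolute convergence of $\sum_{n\ge 1}1/b_n$. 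Hence $(b_n)$ meets the hypotheses of Theorem~\ref{thm1}, which delivers $[1/F(z)]_{\ell}=\sum_{n\ge 1}\frac{-1}{F'(b_n)}\frac{1}{b_n^{\ell+1}}$ unconditionally; absolute convergence then lets me regroup this single sum as the double sum $\sum_{k\ge 0}\sum_{m=1}^{M}\frac{-1}{a_{m,k}^{\ell+1}F'(a_{m,k})}$. Specialising $\ell=0$ and $\ell=1$ and quoting the Corollary to Theorem~\ref{thm1} (which gives $[1/F]_0=1$ and $[1/F]_1=\sum_{n\ge 1}1/b_n$), then regrouping once more, produces the two displayed identities.

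I expect the only real friction to be bookkeeping: making precise what \emph{normally converges} means and checking that it is exactly the condition $\sum 1/|a_{m,k}|<\infty$ that legitimises both the application of Theorem~\ref{thm1} and the passage between the flattened series and the double series; and being honest that for $\ell\ge 2$ the bare word \emph{formally} is carrying the statement, since without a convergence hypothesis the series on the right need not converge at all, so the substantive content is the normally-convergent case. I would also want to state explicitly the hypotheses the theorem leaves implicit, namely that $a_{m,k}\ne 0$ and that the $a_{m,k}$ are pairwise distinct, without which $F'(a_{m,k})$ may vanish and neither side is defined. No estimate beyond those already inside Theorem~\ref{thm1} is needed.
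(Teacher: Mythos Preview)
Your proposal is correct and matches the paper's approach: the paper's own proof reads in full ``The proof is similar to the proof of Theorem~\ref{thm3}. We left it to the reader,'' and your flattening $\{a_{m,k}\}\to(b_n)$ followed by a direct appeal to Theorem~\ref{thm1} is exactly the intended argument (indeed simpler than literally imitating Theorem~\ref{thm3}, since no cyclotomic structure is present). Your explicit handling of the implicit hypotheses and of the word \emph{formally} goes beyond what the paper supplies.
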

\begin{proof}
The proof is similar to the proof of Theorem \ref{thm3}. We left it to the reader.\\
\end{proof}

\begin{example}
Let $F(z)=\frac{\Gamma(\frac{1}{2})}{\Gamma(1+\frac{z}{2})\Gamma(\frac{1}{2}-\frac{z}{2})}$, then we can reformulate $F(z)$ as\\
\begin{equation*}
  F(z)=\prod_{k=1}^{\infty}(1-\frac{z}{a_{k}})(1-\frac{z}{b_{k}})
\end{equation*}
where $a_{k}=2k-1$, $b_{k}=-2k$. It's not hard to obtain\\
\begin{equation*}
  F'(z)=\frac{\sqrt{\pi}}{2\Gamma(1+\frac{z}{2})\Gamma(\frac{1}{2}-\frac{z}{2})}(\psi(\frac{1}{2}-\frac{z}{2})-\psi(1+\frac{z}{2}))
\end{equation*}
It turns out that\\
\begin{equation*}
F'(a_{k})=-F'(b_{k}) =\frac{\sqrt{\pi}}{2}\frac{(-1)^k (k-1)!}{\Gamma(k+\frac{1}{2})}
\end{equation*}
By the Theorem \ref{thm4}\\
\begin{align*}
   & \sum_{k=1}^{\infty}\frac{(-1)^{k-1} \Gamma(k+\frac{1}{2})}{(k-1)!}(\frac{1}{2k-1}+\frac{1}{2k})= \frac{\sqrt{\pi}}{2}\\
   & \sum_{k=1}^{\infty}\frac{(-1)^{k-1} \Gamma(k+\frac{1}{2})}{(k-1)!}(\frac{1}{(2k-1)^2}-\frac{1}{(2k)^2})= \frac{\sqrt{\pi}\log 2}{2}\\
   & \sum_{k=1}^{\infty}\frac{(-1)^{k-1} \Gamma(k+\frac{1}{2})}{(k-1)!}(\frac{1}{(2k-1)^3}+\frac{1}{(2k)^3})= \frac{\sqrt{\pi}(\log^2 2+\zeta(2))}{4}
\end{align*}
Note that $\frac{\Gamma(k+\frac{1}{2})}{(k-1)!}=\sqrt{\pi}\frac{(2k-1)!!}{(2k-2)!!}$, in fact\\
\begin{align*}
   & \sum_{k=1}^{\infty}(-1)^{k-1}\frac{ (2k-1)!!}{(2k-2)!!}(\frac{1}{2k-1}+\frac{1}{2k})= \frac{1}{2}\\
   & \sum_{k=1}^{\infty}(-1)^{k-1}\frac{ (2k-1)!!}{(2k-2)!!}(\frac{1}{(2k-1)^2}-\frac{1}{(2k)^2})= \frac{\log 2}{2}\\
   & \sum_{k=1}^{\infty}(-1)^{k-1}\frac{ (2k-1)!!}{(2k-2)!!}(\frac{1}{(2k-1)^3}+\frac{1}{(2k)^3})= \frac{\log^2 2+\zeta(2)}{2}
\end{align*}
\end{example}
More general, we have
\begin{example}
Assume that $0\leq \Re(a),\Re(b)<1$ and $a,b\neq 0$. Let $a_{k}=a+k$, $b_{k}=-b-k$, $k=0,1,2,...$. Consider\\
\begin{equation*}
  F(z)=\prod_{k=0}^{\infty}(1-\frac{z}{a_{k}})(1-\frac{z}{b_{k}})=\frac{\Gamma(a)\Gamma(b)}{\Gamma(a-z)\Gamma(b+z)}
\end{equation*}
One can show that\\
\begin{equation*}
F'(z)=\frac{\Gamma(a)\Gamma(b)}{\Gamma(a-z)\Gamma(b+z)}(\psi(a-z)-\psi(b-z))
\end{equation*}
That is\\
\begin{equation*}
  \frac{1}{F'(a_{k})}=-\frac{1}{F'(b_{k})}=\frac{(-1)^{k-1}\Gamma(a+b+k)}{k!\Gamma(a)\Gamma(b)}
\end{equation*}
Therefore\\
\begin{equation*}
[\frac{1}{F(z)}]_{J}=\sum_{k=0}^{\infty}\frac{(-1)^{k}\Gamma(a+b+k)}{k!\Gamma(a)\Gamma(b)}(\frac{1}{(a+k)^{J+1}}+\frac{(-1)^m}{(b+k)^{J+1}})
\end{equation*}
If we consider some special cases for instance $[\frac{1}{F(z)}]_{0}$ and $[\frac{1}{F(z)}]_{1}$, then\\
\begin{equation*}
  \Gamma(a)\Gamma(b)=\sum_{k=0}^{\infty}\frac{(-1)^{k}\Gamma(a+b+k)}{k!}(\frac{1}{a+k}+\frac{1}{b+k})
\end{equation*}
and\\
\begin{equation*}
  \Gamma(a)\Gamma(b)(\sum_{k=0}^{\infty} \frac{1}{a+k}-\frac{1}{b+k})=\sum_{k=0}^{\infty}\frac{(-1)^{k}\Gamma(a+b+k)}{k!}(\frac{1}{(a+k)^2}-\frac{1}{(b+k)^2})
\end{equation*}
Moreover, if we let $a=\frac{1}{4}, b=\frac{3}{4}$, then it becomes\\
\begin{equation*}
  \Gamma(\frac{1}{4})\Gamma(\frac{3}{4})(\sum_{k=0}^{\infty} \frac{1}{\frac{1}{4}+k}-\frac{1}{\frac{3}{4}+k})=\sum_{k=0}^{\infty}(-1)^{k}(\frac{1}{(\frac{1}{4}+k)^2}-\frac{1}{(\frac{3}{4}+k)^2})
\end{equation*}
or\\
\begin{equation*}
  \sum_{k=0}^{\infty}(-1)^{\frac{k(k+1)}{2}}\frac{1}{(2k+1)^2}=\frac{\sqrt{2}\pi^2}{16}
\end{equation*}
\end{example}

\section{The formula of derivative}
We can take derivative with respect to $a$ in (\ref{co3.1}). As a simple application, following we discuss the case when $m=2$.\\
\begin{theorem}\label{thm7}
\begin{equation*}
\Gamma(a+z)\Gamma(a-z)(\psi(a+z)+\psi(a-z))=2\psi(a)\Gamma(a)^2+S_{1}(a,z)-S_{2}(a,z)
\end{equation*}
where\\
\begin{align*}
   & S_{1}(a,z)=\sum_{k=0}^{\infty}\sum_{n=1}^{\infty}\frac{4(-1)^k \psi(2a+k)\Gamma(2a+k)}{k!(a+k)^{2n+1}}z^{2n} \\
   & S_{2}(a,z)=\sum_{k=0}^{\infty}\sum_{n=1}^{\infty}\frac{2(2n+1)(-1)^k \Gamma(2a+k)}{k!(a+k)^{2n+2}}z^{2n}
\end{align*}
\end{theorem}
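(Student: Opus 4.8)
The plan is to obtain the identity by differentiating, with respect to the parameter $a$, the family of coefficient identities of which Corollary (\ref{co3.1}) with $m=2$ is the first member; concretely, I would differentiate the function-level identity (\ref{ex1.2}) for $\Gamma(a+z)\Gamma(a-z)$ after first re-expanding its rational tail into a genuine power series in $z$. Throughout I keep $z$ fixed with $|z|<|a|$ and restrict $a$ to the domain where all series below converge (it suffices to take $0<\Re(a)<3/2$ and $a\notin\{0,-\tfrac12,-1,-\tfrac32,\dots\}$; these restrictions can be relaxed afterwards by analytic continuation).

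\emph{Step 1: re-expansion in $z$.} For $|z|<|a+k|$ one has the geometric expansion
\[
\frac{z^2}{z^2-(a+k)^2}=-\sum_{n=1}^{\infty}\frac{z^{2n}}{(a+k)^{2n}},
\]
so substituting into (\ref{ex1.2}) and collecting powers of $z$ gives
\[
\Gamma(a+z)\Gamma(a-z)=\Gamma(a)^2+\sum_{k=0}^{\infty}\sum_{n=1}^{\infty}\frac{2(-1)^{k}\Gamma(2a+k)}{k!\,(a+k)^{2n+1}}\,z^{2n}.
\]
Equivalently, this is the statement $[g_a(z)]_{2n}=\sum_{k\ge 0}2(-1)^k\Gamma(2a+k)\big/\big(\Gamma(a)^2\,k!\,(a+k)^{2n+1}\big)$ furnished by Theorem \ref{thm2} together with the value $F'(a+k)=\Gamma(a)^2k!(-1)^{k-1}/\Gamma(2a+k)$ computed in the proof of Corollary (\ref{co3.1}) with $m=2$.

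\emph{Step 2: differentiate in $a$.} On the left, the product rule and $\Gamma'(w)=\Gamma(w)\psi(w)$, applied with $w=a\pm z$ (each of $a$-derivative $1$), give
\[
\partial_a\big[\Gamma(a+z)\Gamma(a-z)\big]=\Gamma(a+z)\Gamma(a-z)\big(\psi(a+z)+\psi(a-z)\big),
\]
which is exactly the left-hand side of the theorem. On the right, $\partial_a[\Gamma(a)^2]=2\Gamma(a)^2\psi(a)$ produces the term $2\psi(a)\Gamma(a)^2$, while for the general summand, using $\partial_a\Gamma(2a+k)=2\Gamma(2a+k)\psi(2a+k)$ and $\partial_a(a+k)^{-(2n+1)}=-(2n+1)(a+k)^{-(2n+2)}$,
\[
\partial_a\!\left[\frac{2(-1)^{k}\Gamma(2a+k)}{k!\,(a+k)^{2n+1}}z^{2n}\right]=\frac{4(-1)^{k}\psi(2a+k)\Gamma(2a+k)}{k!\,(a+k)^{2n+1}}z^{2n}-\frac{2(2n+1)(-1)^{k}\Gamma(2a+k)}{k!\,(a+k)^{2n+2}}z^{2n}.
\]
Summing over $k$ and $n$, the first family of terms is precisely $S_1(a,z)$ and the second is precisely $S_2(a,z)$, and the asserted identity drops out.

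\emph{The main obstacle} is justifying the term-by-term differentiation of the double series in Step 2. I would handle this via Weierstrass's theorem on locally uniformly convergent series of holomorphic functions: each summand is holomorphic in $a$ on the stated domain, so it suffices to check that the differentiated double series converges locally uniformly. Here Stirling's formula gives $\Gamma(2a+k)/k!=O(k^{2\Re(a)-1})$ and $\psi(2a+k)=O(\log k)$; hence, for fixed $n\ge 1$, the $k$-sum of the $S_1$-terms is dominated by $\sum_k k^{2\Re(a)-2n-2}\log k$, which converges (the worst case $n=1$ requiring $\Re(a)<3/2$), and the $S_2$-terms are even smaller, while summing over $n$ contributes a geometric factor of ratio at most $|z|^2/|a|^2<1$, uniformly for $z$ in compact subsets of the disc and $a$ in compact subsets of the domain. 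This legitimizes differentiating under the double sum and completes the proof; pinning down the sharp range of $(a,z)$ for which every displayed series converges is then routine bookkeeping with the same estimates.
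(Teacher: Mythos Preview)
Your proposal is correct and follows essentially the same route as the paper: re-expand the rational tail of (\ref{ex1.2}) as a power series in $z$, then differentiate in $a$. If anything, you supply more than the paper does, since the paper simply writes ``Take $\partial/\partial a$ on both sides'' without addressing the term-by-term differentiation that you justify via Stirling and Weierstrass.
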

\begin{proof}
The proof is easy, recall (\ref{ex1.2})\\
\begin{equation*}
  \Gamma(a+z)\Gamma(a-z)=\Gamma(a)^2+\sum_{k=0}^{\infty}\frac{2(-1)^{k+1}\Gamma(2a+k)}{(a+k)k!}\frac{z^2}{z^2-(a+k)^2}
\end{equation*}
where it converges in $|z|<a$. The right hand side $\frac{z^2}{z^2-(a+k)^2}$ can be represented as $-\sum_{n=1}^{\infty}(\frac{z}{a+k})^{2n}$. It can be reformulated as\\
\begin{equation*}
\Gamma(a+z)\Gamma(a-z)=\Gamma(a)^2+\sum_{k=0}^{\infty}\sum_{n=1}^{\infty}\frac{2(-1)^{k}\Gamma(2a+k)}{k!(a+k)^{2n+1}}z^{2n}
\end{equation*}
Take $\frac{\partial }{\partial a}$ on both sides then obtain what required.\\
\end{proof}

\begin{corollary}
Denote\\
\begin{align*}
      &\zeta_{A}(s)=\sum_{k=1}^{\infty}\frac{(-1)^{k-1}}{k^s}\\
      &\zeta_{A,H}(s)=\sum_{k=1}^{\infty}\frac{(-1)^{k-1}H_{k}}{k^s}
\end{align*}
where $H_{k}$ is the harmonic number $H_{k}=1+\frac{1}{2}+...+\frac{1}{k}$. Then the following relation is valid for $n\in\mathbb{Z}^{+}$.\\
\begin{equation*}
  \zeta_{A,H}(2n)=(n-\frac{n}{2^{2n}}-\frac{1}{2^{2n+1}})\zeta(2n+1)-\sum_{j=1}^{n-1}\zeta(2j+1)\zeta_{A}(2n-2j)
\end{equation*}
\end{corollary}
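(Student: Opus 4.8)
The plan is to specialize Theorem~\ref{thm7} to $a=1$ and to compare the coefficients of $z^{2n}$ on the two sides. The value $a=1$ is essentially forced by the shape of the target: at $a=1$ the ratio $\Gamma(2+k)/k!$ equals $k+1$, which cancels one power of $(1+k)$ in each denominator of $S_1$ and $S_2$, while $\psi(2+k)=-\gamma+H_{k+1}$ produces exactly the harmonic numbers that $\zeta_{A,H}$ requires. Carrying this out, and reindexing the sums by $j=k+1$ (so that $(-1)^k=(-1)^{j-1}$), the right-hand side of Theorem~\ref{thm7} at $a=1$ becomes $-2\gamma$ together with
\[
S_1(1,z)=\sum_{n\ge1}\bigl(4\,\zeta_{A,H}(2n)-4\gamma\,\zeta_A(2n)\bigr)z^{2n},\qquad S_2(1,z)=\sum_{n\ge1}2(2n+1)\,\zeta_A(2n+1)\,z^{2n},
\]
where I use $\sum_{j\ge1}(-1)^{j-1}j^{-2n}=\zeta_A(2n)$ and $\sum_{j\ge1}(-1)^{j-1}H_jj^{-2n}=\zeta_{A,H}(2n)$, and $2\psi(1)\Gamma(1)^2=-2\gamma$. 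The interchange of the two summations is valid in the regime $\Re(a)<2$, $|z|<|a|$ in which (\ref{ex1.2}) and Theorem~\ref{thm7} were established, and $a=1$, $|z|<1$ lies inside it.

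For the left-hand side at $a=1$ I would use two classical expansions. From the Mittag--Leffler expansion $\pi/\sin\pi z=\tfrac1z+\sum_{n\ge1}(-1)^n\tfrac{2z}{z^2-n^2}$ one obtains $\Gamma(1+z)\Gamma(1-z)=\pi z/\sin\pi z=1+2\sum_{m\ge1}\zeta_A(2m)z^{2m}$, and from $\psi(1+z)=-\gamma+\sum_{k\ge1}(-1)^{k-1}\zeta(k+1)z^k$ one obtains $\psi(1+z)+\psi(1-z)=-2\gamma-2\sum_{j\ge1}\zeta(2j+1)z^{2j}$ (the odd-degree terms cancel under $z\mapsto-z$). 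Multiplying these two series and reading off the coefficient of $z^{2n}$ for $n\ge1$ gives
\[
\bigl[\Gamma(1+z)\Gamma(1-z)(\psi(1+z)+\psi(1-z))\bigr]_{2n}=-4\gamma\,\zeta_A(2n)-2\zeta(2n+1)-4\sum_{j=1}^{n-1}\zeta_A(2n-2j)\,\zeta(2j+1).
\]

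Equating this with the $z^{2n}$-coefficient of the right-hand side, namely $4\zeta_{A,H}(2n)-4\gamma\,\zeta_A(2n)-2(2n+1)\zeta_A(2n+1)$, the $\gamma$-terms cancel identically (a reassuring internal consistency check), leaving $4\zeta_{A,H}(2n)=2(2n+1)\zeta_A(2n+1)-2\zeta(2n+1)-4\sum_{j=1}^{n-1}\zeta_A(2n-2j)\zeta(2j+1)$. Dividing by $4$ and substituting $\zeta_A(2n+1)=(1-2^{-2n})\zeta(2n+1)$ collapses the first two terms on the right into $\bigl(n-\tfrac{n}{2^{2n}}-\tfrac{1}{2^{2n+1}}\bigr)\zeta(2n+1)$, which is precisely the asserted formula.

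The argument is routine once the specialization $a=1$ is made; the only places needing care are the term-by-term rearrangement of the double series for $S_1$ and $S_2$ together with the reindexing $j=k+1$, and the two standard expansions of $\pi z/\sin\pi z$ and of $\psi(1\pm z)$ --- all convergent for $|z|<1$, which is exactly the domain in which (\ref{ex1.2}) holds and hence all that is needed to extract and match Taylor coefficients. I do not expect a genuine obstacle beyond this bookkeeping and the final algebraic simplification.
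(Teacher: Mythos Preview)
Your proposal is correct and follows essentially the same route as the paper's proof: specialize Theorem~\ref{thm7} to $a=1$, use $\Gamma(1+z)\Gamma(1-z)=\pi z/\sin\pi z$ together with the standard expansions of $\pi z/\sin\pi z$ and $\psi(1+z)+\psi(1-z)$ on the left, compute $[S_1(1,z)]_{2n}$ and $[S_2(1,z)]_{2n}$ via the reindexing $j=k+1$ on the right, observe the cancellation of the $\gamma$-terms, and finish with $\zeta_A(2n+1)=(1-2^{-2n})\zeta(2n+1)$. The only cosmetic difference is that the paper names the coefficients $\tau_{2n}=[\pi z/\sin\pi z]_{2n}$ before identifying them as $2\zeta_A(2n)$, whereas you do this in one step.
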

\begin{proof}
Let $a=1$ then according to Theorem \ref{thm7}\\
\begin{equation*}
\frac{\pi z}{\sin (\pi z)}(\psi(1+z)+\psi(1-z))=-2\gamma+S_{1}(1,z)-S_{2}(1,z)
\end{equation*}
\begin{align*}
  S_{1}(1,z) & =\sum_{k=0}^{\infty}\sum_{n=1}^{\infty}\frac{4(-1)^k \psi(2+k)}{(1+k)^{2n}}z^{2n}\\
   & =-\gamma\sum_{k=0}^{\infty}\sum_{n=1}^{\infty}\frac{4(-1)^{k}}{(1+k)^{2n}}z^{2n}+\sum_{k=0}^{\infty}\sum_{n=1}^{\infty}\frac{4(-1)^k H_{1+k}}{(1+k)^{2n}}z^{2n}
\end{align*}
or rewritten as\\
\begin{equation*}
[S_{1}(1,z)]_{2n}=-4\gamma\zeta_{A}(2n)+4\zeta_{A,H}(2n), (n\geq 1)
\end{equation*}
On the other hand\\
\begin{align*}
  S_{2}(a,z)&=\sum_{k=0}^{\infty}\sum_{n=1}^{\infty}\frac{2(2n+1)(-1)^k \Gamma(2a+k)}{k!(a+k)^{2n+2}}z^{2n}\\
  &=\sum_{n=1}^{\infty}\sum_{k=0}^{\infty}\frac{2(2n+1)(-1)^k}{(k+1)^{2n+1}}z^{2n}
\end{align*}
or rewritten as\\
\begin{equation*}
[S_{2}(1,z)]_{2n}=2(2n+1)\zeta_{A}(2n+1), (n\geq 1)
\end{equation*}
Therefore\\
\begin{equation*}
[S_{1}(1,z)-S_{2}(1,z)]_{2n}=-4\gamma\zeta_{A}(2n)+4\zeta_{A,H}(2n)-2(2n+1)\zeta_{A}(2n+1), (n\geq 1)
\end{equation*}
If we denote $[\frac{\pi z}{\sin (\pi z)}]_{n}$ by $\tau_{n}$, and note that $(\psi(1+z)+\psi(1-z))=-2(\gamma+\sum_{n=1}^{\infty}\zeta(2n+1)z^{2n})$ then\\
\begin{equation*}
[\frac{\pi z}{\sin (\pi z)}(\psi(1+z)+\psi(1-z))]_{2n}=-2(\gamma\tau_{2n}+\zeta(2n+1)+\sum_{j=1}^{n-1}\zeta(2j+1)\tau_{2n-2j})
\end{equation*}
Therefore\\
\begin{equation*}
2\gamma\zeta_{A}(2n)-2\zeta_{A,H}(2n)+(2n+1)\zeta_{A}(2n+1)=\gamma\tau_{2n}+\zeta(2n+1)+\sum_{j=1}^{n-1}\zeta(2j+1)\tau_{2n-2j}
\end{equation*}
One can show that $\tau_{n}=2\zeta_{A}(2n)$ and $\zeta_{A}(s)=(1-\frac{1}{2^{s-1}})\zeta(s)$. It follows that\\
\begin{equation*}
  \zeta_{A,H}(2n)=(n-\frac{n}{2^{2n}}-\frac{1}{2^{2n+1}})\zeta(2n+1)-\sum_{j=1}^{n-1}\zeta(2j+1)\zeta_{A}(2n-2j)
\end{equation*}
\end{proof}

\begin{corollary}
Denote\\
\begin{align*}
      &\beta(s)=\sum_{k=0}^{\infty}\frac{(-1)^{k}}{(2k-1)^s}\\
      &\beta_{H}(s)=\sum_{k=1}^{\infty}\frac{(-1)^{k-1}H_{k-1}}{(2k-1)^s}
\end{align*}
Then for $n\in\mathbb{Z}^{+}$ the following relation is valid.\\
\begin{equation*}
  \beta_{H}(2n+1)=(2n+1)\beta(2n+2)-\beta(2n+1)\log 4-2\sum_{j=0}^{n-1}(1-\frac{1}{2^{2n-2j+1}})\beta(2j+1)\zeta(2n-2j+1)
\end{equation*}
\end{corollary}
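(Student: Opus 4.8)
The plan is to specialize Theorem~\ref{thm7} at $a=\tfrac12$, exactly in the spirit of the preceding corollary (which took $a=1$). The point of choosing $a=\tfrac12$ is that every Gamma factor degenerates: $\Gamma(2a+k)=\Gamma(1+k)=k!$, so the weights $\Gamma(2a+k)/k!$ reduce to $1$; $\psi(2a+k)=\psi(1+k)=H_k-\gamma$; and $(a+k)^{-s}=2^{s}(2k+1)^{-s}$. Consequently, after the harmless reindexing $k\mapsto k+1$ that identifies $\beta(s)$ and $\beta_H(s)$ with the Dirichlet-type sums $\sum_{k\ge0}(-1)^k(2k+1)^{-s}$ and $\sum_{k\ge0}(-1)^kH_k(2k+1)^{-s}$, one obtains
\[
 [S_1(\tfrac12,z)]_{2n}=2^{2n+3}\bigl(\beta_H(2n+1)-\gamma\,\beta(2n+1)\bigr),\qquad [S_2(\tfrac12,z)]_{2n}=(2n+1)\,2^{2n+3}\,\beta(2n+2).
\]

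Next I would expand the left-hand side $\Gamma(\tfrac12+z)\Gamma(\tfrac12-z)\bigl(\psi(\tfrac12+z)+\psi(\tfrac12-z)\bigr)$ as a power series in $z$ on the disc $|z|<\tfrac12$, the common domain of validity of all expansions involved. For the Gamma product I use the reflection formula $\Gamma(\tfrac12+z)\Gamma(\tfrac12-z)=\pi/\cos\pi z$ together with the Mittag--Leffler expansion $\pi\sec\pi z=4\sum_{k\ge0}(-1)^k(2k+1)\bigl((2k+1)^2-4z^2\bigr)^{-1}$; expanding each summand as a geometric series in $4z^2/(2k+1)^2$ and interchanging sums yields the clean identity $[\pi\sec\pi z]_{2m}=4^{m+1}\beta(2m+1)$. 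For the digamma factor I use $\psi(\tfrac12+z)+\psi(\tfrac12-z)=2\psi(\tfrac12)+2\sum_{n\ge1}\frac{\psi^{(2n)}(1/2)}{(2n)!}z^{2n}$, together with $\psi^{(2n)}(\tfrac12)=-(2n)!\,\zeta(2n+1,\tfrac12)$, the evaluation $\zeta(2n+1,\tfrac12)=(2^{2n+1}-1)\zeta(2n+1)$, and $\psi(\tfrac12)=-\gamma-2\log2$, so that the digamma factor equals $-2(\gamma+2\log2)-2\sum_{n\ge1}(2^{2n+1}-1)\zeta(2n+1)z^{2n}$.

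Finally I would form the Cauchy product of these two series, read off the coefficient of $z^{2n}$ for $n\ge1$ (the constant $2\psi(\tfrac12)\Gamma(\tfrac12)^2$ affects only the $z^0$ term), equate it with $[S_1(\tfrac12,z)]_{2n}-[S_2(\tfrac12,z)]_{2n}$, and divide through by $2^{2n+3}$. The term carrying $-\gamma\,\beta(2n+1)$ then cancels against the corresponding piece of $S_1$, isolating $\beta_H(2n+1)$; the term $(2n+1)\beta(2n+2)$ comes straight from $S_2$ and $2\log2=\log4$ matches the asserted $\log$ term. The only real computation is simplifying the convolution coefficient: the $\sec$-side contributes $4^{p+1}\beta(2p+1)$ paired with $(2^{2(n-p)+1}-1)\zeta(2(n-p)+1)$, and after dividing by $2^{2n+3}$ the powers of $2$ collapse via $4^{p-n}\bigl(2^{2(n-p)+1}-1\bigr)=2-4^{-(n-p)}=2\bigl(1-2^{-(2n-2p+1)}\bigr)$; writing $j=p$ produces exactly $-2\sum_{j=0}^{n-1}\bigl(1-2^{-(2n-2j+1)}\bigr)\beta(2j+1)\zeta(2n-2j+1)$, which is the claimed identity. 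The main obstacle is nothing more than this bookkeeping of factors of $2$ (and keeping the reindexing of $\beta$ and $\beta_H$ consistent throughout); the analytic content — termwise expansion and coefficient comparison inside $|z|<\tfrac12$ — is identical to that already used for the $\zeta_{A,H}$ corollary.
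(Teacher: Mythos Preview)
Your proposal is correct and follows essentially the same route as the paper: specialize Theorem~\ref{thm7} at $a=\tfrac12$, compute $[S_1(\tfrac12,z)]_{2n}$ and $[S_2(\tfrac12,z)]_{2n}$, expand the left-hand side $\Gamma(\tfrac12+z)\Gamma(\tfrac12-z)\bigl(\psi(\tfrac12+z)+\psi(\tfrac12-z)\bigr)$ as a Cauchy product, and compare coefficients. The only cosmetic difference is that the paper identifies $[\pi\sec\pi z]_{2n}=2^{2n+2}\beta(2n+1)$ via the Euler-number expansion of $\sec$ and the closed form $\beta(2n+1)=\frac{(-1)^nE_{2n}\pi^{2n+1}}{2^{2n+2}(2n)!}$, whereas you obtain the same coefficient from the Mittag--Leffler partial-fraction expansion of $\sec$; both are standard and lead to the identical bookkeeping you describe.
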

\begin{proof}
Let $u_{2n}=[\psi(\frac{1}{2}+z)+\psi(\frac{1}{2}-z)]_{2n}$, one can show that\\
\begin{equation*}
  u_{2n}=\begin{cases}
  2(1-2^{2n+1})\zeta(2n+1),\text{if } n\geq 1\\
  2(-\gamma-\log 4),\text{if } n=0
  \end{cases}
\end{equation*}
On the other hand,\\
\begin{equation*}
\frac{\pi}{\cos(\pi z)}=\sum_{k=0}^{\infty}\frac{(-1)^{k}E_{2k}\pi^{2k+1}}{(2k)!}z^{2k}
\end{equation*}
where $E_{2k}$ is Euler number $E_{0}=1,E_{2}=-1,E_{4}=5,E_{6}=-61,...$. Recall the formula about the Dirichlet beta function $\beta(2n+1)=\frac{(-1)^{n}E_{2n}\pi^{2n+1}}{2^{2n+2}(2n)!}$ we have\\
\begin{equation*}
  [\frac{\pi}{\cos(\pi z)}]_{2n}=2^{2n+2}\beta(2n+1)
\end{equation*}
One can show that
\begin{align*}
   & S_{1}(\frac{1}{2},z)=\sum_{k=0}^{\infty}\sum_{n=1}^{\infty}\frac{4(-1)^k \psi(1+k)}{(\frac{1}{2}+k)^{2n+1}}z^{2n} \\
   & S_{2}(\frac{1}{2},z)=\sum_{k=0}^{\infty}\sum_{n=1}^{\infty}\frac{2(2n+1)(-1)^k }{(\frac{1}{2}+k)^{2n+2}}z^{2n}
\end{align*}
or rewritten as\\
\begin{align*}
   & [S_{1}(\frac{1}{2},z)]_{2n}=2^{2n+3}(-\gamma \beta(2n+1)+\beta_{H}(2n+1)) \\
   & [S_{2}(\frac{1}{2},z)]_{2n}=2^{2n+3}(2n+1)\beta(2n+2)
\end{align*}
According to Theorem \ref{thm7}, by some computation one can get\\
\begin{equation*}
  \beta_{H}(2n+1)=(2n+1)\beta(2n+2)-\beta(2n+1)\log 4-2\sum_{j=0}^{n-1}(1-\frac{1}{2^{2n-2j+1}})\beta(2j+1)\zeta(2n-2j+1)
\end{equation*}
\end{proof}

\section{Differential Relation}
\begin{theorem}\label{thm8}(\textbf{Differential Relation})\\
Suppose that the sequence $(a_{k})$ satisfies the condictions in Theorem \ref{thm2}. Let $F(z)=z\prod_{k=0}^{\infty}(1-(\frac{z}{a_{k}})^2)$, $H(z)=\frac{F'(z)}{F(z)}$, then\\
\begin{equation*}
  \frac{F''(z)}{F(z)}=H'(z)+H^2(z)=\frac{F''(\infty)}{F(\infty)}+\sum_{k=0}^{\infty}\frac{F''(a_{k})}{F'(a_{k})}\frac{2a_{k}}{z^2-a_{k}^{2}}
\end{equation*}
where
\begin{equation*}
\frac{F''(\infty)}{F(\infty)}=\sum_{k=0}^{\infty}\frac{2F''(a_{k})}{a_{k}F'(a_{k})}-6\sum_{k=0}^{\infty}\frac{1}{a_{k}^2}
\end{equation*}
Equivalently, the relation can be reformulated as\\
\begin{equation*}
    [\frac{F''(z)}{F(z)}]_{J}=\begin{cases}
    0,\text{ if } J \text{ odd}\\
    -6\sum_{k=0}^{\infty}\frac{1}{a_{k}2}, \text{ if } J=0\\
    \sum_{k=0}^{\infty}\frac{-2F''(a_{k})}{a_{k}^{J+1}F'(a_{k})}, \text{ if } J=2,4,6,...
    \end{cases}
\end{equation*}
\end{theorem}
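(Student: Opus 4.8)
The plan is to treat $G(z):=F''(z)/F(z)$ as a meromorphic function, locate its poles and their principal parts, reconstruct $G$ from them by a Mittag--Leffler argument of the same flavour as Theorems \ref{thm1} and \ref{thm2}, and finally read off the Laurent coefficients of $G$ at $0$.

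First I would differentiate the product logarithmically. Since $F(z)=z\prod_{k=0}^{\infty}(1-(z/a_{k})^{2})$ converges normally on $|z|<\inf_{k}|a_{k}|$, term-by-term differentiation gives $H(z):=F'(z)/F(z)=\frac1z+\sum_{k=0}^{\infty}\frac{2z}{z^{2}-a_{k}^{2}}$, and $F''/F=H'+H^{2}$ is then a formal identity. Two consequences are immediate. Since $F$ is odd, $H$ is odd, hence $G=H'+H^{2}$ is even: this is the line ``$J$ odd'' of the last display. And near $z=0$ the series part of $H$ equals $-2(\sum_{k}a_{k}^{-2})z+O(z^{3})$, so $H(z)=\frac1z-2(\sum_{k}a_{k}^{-2})z+O(z^{3})$ and therefore $G(z)=H'(z)+H(z)^{2}=-6\sum_{k}a_{k}^{-2}+O(z^{2})$; in particular $G$ is holomorphic at $0$ and $[G]_{0}=-6\sum_{k}a_{k}^{-2}$, which is the line ``$J=0$''.

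Next I would compute the principal part of $G$ at $z=a_{k}$, the point $-a_{k}$ following by evenness. Since $a_{k}$ is a simple zero of $F$, expanding $F$ and $F'$ about $a_{k}$ gives $H(z)=\frac1{z-a_{k}}+\frac{F''(a_{k})}{2F'(a_{k})}+O(z-a_{k})$, and substituting into $H'+H^{2}$ the $(z-a_{k})^{-2}$ terms from $H'$ and from $H^{2}$ cancel, so $G$ has a simple pole at $a_{k}$ with residue $F''(a_{k})/F'(a_{k})$, hence a simple pole at $-a_{k}$ with residue $-F''(a_{k})/F'(a_{k})$. As $\frac{2a_{k}}{z^{2}-a_{k}^{2}}=\frac1{z-a_{k}}-\frac1{z+a_{k}}$ reproduces exactly this pair of principal parts, the function $\Phi(z):=G(z)-\sum_{k=0}^{\infty}\frac{F''(a_{k})}{F'(a_{k})}\frac{2a_{k}}{z^{2}-a_{k}^{2}}$ is entire; the series converges normally on compacta since $\frac{2a_{k}}{z^{2}-a_{k}^{2}}=-\frac2{a_{k}}+\frac{2z^{2}}{a_{k}(z^{2}-a_{k}^{2})}$ with the last term $O(a_{k}^{-3})$, using $\sum a_{k}^{-2}<\infty$ and the convergence of $\sum_{k}\frac{F''(a_{k})}{a_{k}F'(a_{k})}$. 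A growth estimate for the canonical product $F$ (genus $\le 1$, and in the arithmetic case $a_{k}=a+k$ one even has $F$ expressed through $\Gamma$, so that $F'/F$ and $G$ grow at most polynomially off the zeros) forces $\Phi$ to be constant. Evaluating the defining relation at $z=0$ gives $\Phi(0)=[G]_{0}+2\sum_{k}\frac{F''(a_{k})}{a_{k}F'(a_{k})}$, and inserting the value $[G]_{0}=-6\sum_{k}a_{k}^{-2}$ found above yields $\Phi(0)=\sum_{k}\frac{2F''(a_{k})}{a_{k}F'(a_{k})}-6\sum_{k}a_{k}^{-2}$, which we name $F''(\infty)/F(\infty)$. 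This establishes the two displayed identities.

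For the coefficientwise reformulation I would substitute $\frac{2a_{k}}{z^{2}-a_{k}^{2}}=-\sum_{n\ge 0}\frac{2}{a_{k}^{2n+1}}z^{2n}$ into $G=\Phi(0)+\sum_{k}\frac{F''(a_{k})}{F'(a_{k})}\frac{2a_{k}}{z^{2}-a_{k}^{2}}$: odd powers of $z$ are absent, the $z^{0}$ coefficient recombines the $n=0$ terms of the series with $\Phi(0)$ to give $-6\sum_{k}a_{k}^{-2}$, and for $J=2n$ with $n\ge1$ only the series contributes, giving $[G]_{J}=-2\sum_{k}\frac{F''(a_{k})}{a_{k}^{J+1}F'(a_{k})}$. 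The step I expect to be the real obstacle is the genuinely complex-analytic one: verifying that $\Phi$ is a constant rather than a nonconstant entire function --- equivalently, that the Mittag--Leffler remainder is constant and not a polynomial --- and that the series $\sum_{k}F''(a_{k})/(a_{k}F'(a_{k}))$ converges in its natural order; granted those growth and convergence facts, the rest is bookkeeping with the local expansions of $F$, $F'$, $F''$. An alternative, more in the paper's computational spirit, is to expand $H'+H^{2}$ directly, observe the cancellation of all double poles, and partial-fraction the resulting double sum $\sum_{j\ne k}\frac{4z^{2}}{(z^{2}-a_{j}^{2})(z^{2}-a_{k}^{2})}$ in each variable; this reaches the same formula but at the cost of a longer calculation.
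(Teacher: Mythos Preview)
Your primary route is a Mittag--Leffler/Liouville argument: compute the residues of $G=F''/F$ at $\pm a_{k}$, subtract the candidate series, and then invoke a growth bound to force the entire remainder $\Phi$ to be constant. The paper does \emph{not} argue this way. It takes exactly the ``alternative'' you sketch in your last sentence: write $H=\tfrac1z+\widetilde H$ with $\widetilde H(z)=\sum_{k}\tfrac{2z}{z^{2}-a_{k}^{2}}$, compute $\widetilde H'+\widetilde H^{2}-\tfrac1z\widetilde H$, observe that the diagonal squares $\bigl(\tfrac{2z}{z^{2}-a_{k}^{2}}\bigr)^{2}$ cancel between $\widetilde H'$ and $\widetilde H^{2}$, and then partial-fraction the off-diagonal double sum $\Delta=\sum_{i\neq j}\tfrac{4z^{2}}{(z^{2}-a_{i}^{2})(z^{2}-a_{j}^{2})}$ term by term. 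The coefficient $\widetilde\delta_{k}$ that emerges is identified as $\lim_{z\to a_{k}}\bigl(\widetilde H(z)-\tfrac{2z}{z^{2}-a_{k}^{2}}\bigr)=\tfrac{F''(a_{k})}{2F'(a_{k})}-\tfrac{3}{2a_{k}}$, and algebraic simplification gives the displayed formula. No Liouville step appears anywhere.

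This difference matters, and you have put your finger on exactly the right spot. Under the stated hypotheses of Theorem~\ref{thm2} one has only $\sum|a_{k}|^{-2}<\infty$, so $F$ is a canonical product of genus at most $1$; that is \emph{not} enough to conclude that $F''/F$ is bounded (or even of sublinear growth) off the zeros, and hence not enough to force your entire $\Phi$ to be constant by Liouville. Your parenthetical appeal to the arithmetic case $a_{k}=a+k$ and the $\Gamma$-representation is a retreat to a special case, not a proof under the theorem's hypotheses. Likewise the convergence of $\sum_{k}F''(a_{k})/(a_{k}F'(a_{k}))$ is not automatic from $\sum|a_{k}|^{-2}<\infty$ alone. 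The paper's purely computational route sidesteps both issues: once the double poles cancel, the remaining object is an explicit double sum of simple partial fractions, and the identification of $\widetilde\delta_{k}$ with a value of $\widetilde H$ is a local (residue) calculation, so no global growth input is required. The price is a longer calculation; the gain is that the argument stays within the convergence assumptions actually made.
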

\begin{proof}
Note that $H=(\log F(z))'$. It turns out that\\
\begin{equation}\label{thm8.1}
  H(z)=\frac{1}{z}+\sum_{k=0}^{\infty}\frac{2z}{z^2-a_{k}^2}
\end{equation}
For convenience define $\widetilde{H}(z):=H(z)-\frac{1}{z}$, by the convergence of $\sum \frac{1}{|a_{k}|^2}$ one can see that $\widetilde{H}(z)$ converges normally in $|z|<M$ By some easy computation one has\\
\begin{equation*}
\widetilde{H}'(z)=\frac{1}{z}\widetilde{H}(z)-\sum_{k=0}^{\infty}(\frac{2z}{z^2-a_{k}^2})^2
\end{equation*}
On the other hand, by (\ref{thm8.1})\\
\begin{equation*}
\widetilde{H}(z)^2=\sum_{k=0}^{\infty}(\frac{2z}{z^2-a_{k}^2})^2+\sum_{i\neq j,i,j\geq 0}\frac{2z}{z^2-a_{i}^2}\frac{2z}{z^2-a_{j}^2}
\end{equation*}
Therefore the following relation is valid\\
\begin{equation*}
\widetilde{H}'(z)+\widetilde{H}(z)^2-\frac{1}{z}\widetilde{H}(z)=\Delta
\end{equation*}
where\\
\begin{equation*}
\Delta=\sum_{i\neq j,i,j\geq 0}\frac{2z}{z^2-a_{i}^2}\frac{2z}{z^2-a_{j}^2}
\end{equation*}
Taking the partial fraction decomposition, then\\
\begin{equation*}
\frac{\Delta}{4z^2}=\sum_{k=0}^{\infty}\widetilde{\delta}_{k}\frac{1}{z^2-a_{k}^2}
\end{equation*}
where\\
\begin{equation*}
  a_{k}\widetilde{\delta}_{k}=\sum_{s\neq k,s=0}^{\infty}\frac{2a_{k}}{a_{k}^2-a_{s}^2}=\lim_{z\rightarrow a_{k}}(\widetilde{H}(z)-\frac{2z}{z^2-a_{k}^2})=\frac{F''(a_{k})}{2F'(a_{k})}-\frac{3}{2a_{k}}
\end{equation*}
Hence\\
\begin{equation*}
  \widetilde{H}'(z)+\widetilde{H}(z)^2-\frac{1}{z}\widetilde{H}(z)=\sum_{k=0}^{\infty}(\frac{F''(a_{k})}{2a_{k}F'(a_{k})}-\frac{3}{2a_{k}^2})\frac{4z^2}{z^2-a_{k}^2}
\end{equation*}
or reformulated as\\
\begin{equation*}
  \frac{F''(z)}{F(z)}-\frac{3F'(z)}{zF(z)}+\frac{3}{z^2}=\sum_{k=0}^{\infty}(\frac{F''(a_{k})}{2a_{k}F'(a_{k})}-\frac{3}{2a_{k}^2})\frac{4z^2}{z^2-a_{k}^2}
\end{equation*}
Moreover, after some simplification, one has a more beautiful relation
\begin{equation}\label{thm8.2}
  \frac{F''(z)}{F(z)}=\frac{F''(\infty)}{F(\infty)}+\sum_{k=0}^{\infty}\frac{F''(a_{k})}{F'(a_{k})}\frac{2a_{k}}{z^2-a_{k}^{2}}
\end{equation}
where
\begin{equation*}
\frac{F''(\infty)}{F(\infty)}=\sum_{k=0}^{\infty}\frac{2F''(a_{k})}{a_{k}F'(a_{k})}-6\sum_{k=0}^{\infty}\frac{1}{a_{k}^2}
\end{equation*}
If we expand (\ref{thm8.2}) as power series around $0$, we can find that\\
\begin{equation*}
    [\frac{F''(z)}{F(z)}]_{J}=\begin{cases}
    0,\text{ if } J \text{ odd}\\
    -6\sum_{k=0}^{\infty}\frac{1}{a_{k}2}, \text{ if } J=0\\
    \sum_{k=0}^{\infty}\frac{-2F''(a_{k})}{a_{k}^{J+1}F'(a_{k})}, \text{ if } J=2,4,6,...
    \end{cases}
\end{equation*}
\end{proof}

\begin{corollary}\label{cor7}
If we define $c_{m}=\sum_{k=0}^{\infty}\frac{1}{a_{k}^{2m}}$ and $\delta_{k}=\frac{F''(a_{k})}{F'(a_{k})}$, then the following relation holds for $m=1,2,3,...$\\
\begin{equation*}
(2m+3)c_{m+1}-2\sum_{i=1}^{m}c_{i}c_{m+1-i}=\sum_{k=0}^{\infty}\frac{\delta_{k}}{a_{k}^{2m+1}}
\end{equation*}
\end{corollary}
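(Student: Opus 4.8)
The plan is to match two different expressions for the even-index Taylor coefficients of $F''(z)/F(z)$ around $z=0$: the one already furnished by Theorem \ref{thm8}, and one obtained by a direct power-series computation through $H = F'/F$.

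First I would record that, by Theorem \ref{thm8}, for $J=2m$ with $m\geq 1$ one has $[F''(z)/F(z)]_{2m} = -2\sum_{k=0}^{\infty}\delta_k/a_k^{2m+1}$, which is exactly $-2$ times the right-hand side of the claimed identity; so it suffices to show that $[F''(z)/F(z)]_{2m}$ also equals $-2(2m+3)c_{m+1} + 4\sum_{i=1}^{m} c_i c_{m+1-i}$. To get the latter, I would expand $H(z)$ near $0$: starting from (\ref{thm8.1}), $H(z) = \tfrac1z + \sum_{k\geq 0}\tfrac{2z}{z^2-a_k^2}$, and using $\tfrac{2z}{z^2-a_k^2} = -2\sum_{j\geq 1} z^{2j-1}/a_k^{2j}$ together with the normal convergence of $\widetilde H(z)=H(z)-\tfrac1z$ on $|z|<M$ (established in the proof of Theorem \ref{thm8}), I may interchange the sums to obtain $H(z) = \tfrac1z - 2\sum_{j\geq 1} c_j z^{2j-1}$.

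Next, since $F''/F = H' + H^2$, I differentiate and square this series term by term (again legitimate by normal convergence, so that the double product series converges absolutely on $|z|<M$). The $\pm 1/z^2$ contributions from $H'$ and $H^2$ cancel, and collecting powers of $z$ yields
\begin{equation*}
\frac{F''(z)}{F(z)} = -2\sum_{j\geq 1}(2j+1)c_j\, z^{2j-2} + 4\sum_{i,j\geq 1} c_i c_j\, z^{2i+2j-2}.
\end{equation*}
Extracting $[\,\cdot\,]_{2m}$ for $m\geq 1$: the first sum contributes only its $j=m+1$ term, namely $-2(2m+3)c_{m+1}$, while the second contributes the finite diagonal $i+j=m+1$, namely $4\sum_{i=1}^{m} c_i c_{m+1-i}$ (all indices lying in $\{1,\dots,m\}$). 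Equating this with $-2\sum_{k\geq 0}\delta_k/a_k^{2m+1}$ from Theorem \ref{thm8} and dividing by $-2$ gives precisely $(2m+3)c_{m+1} - 2\sum_{i=1}^{m} c_i c_{m+1-i} = \sum_{k\geq 0}\delta_k/a_k^{2m+1}$.

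The argument is essentially bookkeeping and I do not expect a serious obstacle; the only points deserving a word of care are the legitimacy of the term-by-term operations (expanding each $\tfrac{2z}{z^2-a_k^2}$, summing over $k$, then differentiating and squaring), which is covered by the normal convergence of $\widetilde H$ on $|z|<M$ noted in the proof of Theorem \ref{thm8}, and the absolute convergence of the double series $\sum_{i,j}c_ic_jz^{2i+2j-2}$ there, which justifies isolating the diagonal $i+j=m+1$. One should also note that the case $m=0$ is excluded precisely because $[F''/F]_0$ falls under the separate branch $-6c_1$ of Theorem \ref{thm8}, consistent with the stated range $m=1,2,3,\dots$.
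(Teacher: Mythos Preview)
Your proof is correct and follows essentially the same approach as the paper: expand $H(z)=\tfrac{1}{z}-2\sum_{j\ge1}c_jz^{2j-1}$, compute the coefficients of $F''/F=H'+H^2$, and compare with the expression $[F''/F]_{2m}=-2\sum_k\delta_k/a_k^{2m+1}$ from Theorem~\ref{thm8}. You add a bit more detail on the cancellation of the $\pm 1/z^2$ terms and on the convergence justifications, but the argument is the same.
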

\begin{proof}
In Theorem \ref{thm8}\\
\begin{equation*}
H'(z)+H^2(z)=\frac{F''(z)}{F(z)}
\end{equation*}
For $H(z)=\frac{1}{z}+\sum_{k=1}^{\infty}\frac{2z}{z^2-a_{k}^2}$, there is an expansion\\
\begin{equation*}
  H(z)=\frac{1}{z}-2\sum_{n=1}^{\infty}c_{n}z^{2n-1}
\end{equation*}
By the rearrangement we have\\
\begin{equation*}
  H'(z)+H^2(z)=-6c_{1}+\sum_{m=1}^{\infty}\widetilde{C}_{m}z^{2m}
\end{equation*}
where\\
\begin{equation*}
  \widetilde{C}_{m}=-2(2m+3)c_{m+1}+4\sum_{i=1}^{m}c_{i}c_{m+1-i}
\end{equation*}
That is\\
\begin{equation*}
    [H'(z)+H^2(z)]_{m}=\begin{cases}
0,\text{ if } m \text{ odd}\\
    -6c_{1}, \text{ if } m=0\\
    -2(2m+3)c_{m+1}+4\sum_{i=1}^{m}c_{i}c_{m+1-i}, \text{ if } m=2,4,6,...
    \end{cases}
\end{equation*}
According to Theorem \ref{thm8}\\
\begin{equation*}
  [\frac{F''(z)}{F(z)}]_{J}=\begin{cases}
    0,\text{ if } J \text{ odd}\\
    -6\sum_{k=0}^{\infty}\frac{1}{a_{k}2}, \text{ if } J=0\\
    \sum_{k=0}^{\infty}\frac{-2\delta_{k}}{a_{k}^{J+1}}, \text{ if } J=2,4,6,...
    \end{cases}
\end{equation*}
By comparing the coefficients one obtain the conclusion.
\end{proof}

\begin{example}
A special case is when $F(z)=\pi \sin(\pi z)$. It follows from $F(z)=z\prod_{k=0}^{\infty}(1-(\frac{z}{1+k})^2)=\pi\sin(\pi z)$ one has $\frac{F''(z)}{F(z)}=-\pi^2$ where $a_{k}=1+k$. Therefore $\delta_{k}=\frac{F''(a_{k})}{F'(a_{k})}=0$. If we denote $\zeta(2n)$ by $c_{n}$, then according to Corollary \ref{cor7}\\
\begin{equation*}
(2m+3)c_{m+1}-2\sum_{i=1}^{m}c_{i}c_{m+1-i}=0
\end{equation*}
or rewritten as
\begin{equation*}
(2m+3)\zeta(2m+2)=2\sum_{i=1}^{m}\zeta(2i)\zeta(2m+2-2i)
\end{equation*}
This is a well-known recursion relation for $\zeta(2n)$.
\end{example}

Following example provides a similar relation for $\zeta(2n,a)$\\
\begin{example}
Let $F(z)=z\prod_{k=0}^{\infty}(1-(\frac{x}{a+k})^2)=\frac{z\Gamma(a)^2}{\Gamma(a+z)\Gamma(a-z)}$. It's routine to to find $\frac{F''(z)}{F'(z)}$. Here  $a_{k}=a+k$.\\
\begin{equation*}
\frac{F''(z)}{F'(z)}=\frac{z(\psi(a-z)-\psi(a+z))^2+2(\psi(a-z)-\psi(a+z))-z(\psi'(a-z)+\psi'(a+z))}{1+z(\psi(a-z)-\psi(a+z))}
\end{equation*}
therefore\\
\begin{align*}
\frac{F''(a_{k})}{F'(a_{k})}&=\frac{(a+k)(\psi(-k)-\psi(2a+k))^2+2(\psi(-k)-\psi(2a+k))-(a+k)(\psi'(-k)+\psi'(2a+k))}{1+(a+k)(\psi(-k)-\psi(2a+k))}\\
&=\frac{(a+k)(\psi(-k)^2-\psi'(-k))+(2-2(a+k)\psi(2a+k))\psi(-k)+M_{1}}{(a+k)\psi(-k)+M_{2}}\\
&=\lim_{z\rightarrow -k}\frac{\psi(z)^2-\psi'(z)}{\psi(z)}+\frac{2}{a+k}-2\psi(2a+k)
\end{align*}
Note that
\begin{equation*}
\lim_{z\rightarrow 0}\frac{\psi(z)^2-\psi'(z)}{\psi(z)}=-2\gamma
\end{equation*}
and recall$\psi(z+k)=\psi(z)+\frac{1}{z}+\frac{1}{z+1}+...+\frac{1}{z+k-1}$. Hence\\
\begin{equation*}
\lim_{z\rightarrow -k}\frac{\psi(z)^2-\psi'(z)}{\psi(z)}=2(H_{k}-\gamma)
\end{equation*}
Therefore\\
\begin{align*}
\delta_{k}=\frac{F''(a_{k})}{F'(a_{k})}&=2(H_{k}+\frac{1}{a+k}-\gamma-\psi(2a+k))\\
&=2(\psi(1+k)+\frac{1}{a+k}-\psi(2a+k))
\end{align*}
This makes
\begin{equation*}
\delta_{k}=\frac{F''(a_{k})}{F'(a_{k})}=2(H_{k}+\frac{1}{a+k}-\gamma-\psi(2a+k))=2(\psi(1+k)+\frac{1}{a+k}-\psi(2a+k))
\end{equation*}
\end{example}
Therefore we have the conclusion according to Corollary \ref{cor7}:\\
\begin{theorem}(\textbf{recursion formula for $\zeta(2m,a)$})
For $m=1,2,3,...$ we have\\
\begin{equation*}
    (m+\frac{1}{2})\zeta(2m+2,a)-\sum_{i=1}^{m}\zeta(2i,a)\zeta(2m-2i+2,a)=\sum_{k=0}^{\infty}\frac{\psi(1+k)-\psi(2a+k)}{(a+k)^{2m+1}}
\end{equation*}
\end{theorem}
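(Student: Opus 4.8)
The plan is to obtain the identity as a direct specialization of Corollary \ref{cor7} to the product treated in the example immediately preceding the statement, namely $F(z)=z\prod_{k=0}^{\infty}\bigl(1-(z/(a+k))^2\bigr)=z\,\Gamma(a)^2/\bigl(\Gamma(a+z)\Gamma(a-z)\bigr)$, for which the underlying sequence is $a_k=a+k$. First I would check that the hypotheses carry over: the $a_k$ are distinct and lie in $\mathbb{C}\setminus\{0\}$ as soon as $a\notin\{0,-1,-2,\dots\}$, and $\sum_k |a+k|^{-2}$ converges absolutely, so the conditions of Theorem \ref{thm2}, and hence of Corollary \ref{cor7}, are satisfied. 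Under this choice one has $c_m=\sum_{k=0}^{\infty}(a+k)^{-2m}=\zeta(2m,a)$ for every $m\ge 1$.

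Next I would substitute into Corollary \ref{cor7} the value of $\delta_k=F''(a_k)/F'(a_k)$ computed in that example, namely $\delta_k=2\bigl(\psi(1+k)+\tfrac{1}{a+k}-\psi(2a+k)\bigr)$. The left-hand side of Corollary \ref{cor7} becomes $(2m+3)\zeta(2m+2,a)-2\sum_{i=1}^{m}\zeta(2i,a)\zeta(2m-2i+2,a)$. On the right-hand side, the term $\tfrac{1}{a+k}$ inside $\delta_k$ produces $2\sum_{k=0}^{\infty}(a+k)^{-(2m+2)}=2\zeta(2m+2,a)=2c_{m+1}$, while the remaining part is $2\sum_{k=0}^{\infty}\frac{\psi(1+k)-\psi(2a+k)}{(a+k)^{2m+1}}$; this last series converges absolutely because $\psi(1+k)-\psi(2a+k)=O(1/k)$ by the asymptotic expansion of the digamma function, so the splitting of the sum is legitimate.

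Moving the stray $2c_{m+1}$ to the left lowers the coefficient of $\zeta(2m+2,a)$ from $2m+3$ to $2m+1$, and dividing through by $2$ gives
\[
  (m+\tfrac12)\,\zeta(2m+2,a)-\sum_{i=1}^{m}\zeta(2i,a)\,\zeta(2m-2i+2,a)=\sum_{k=0}^{\infty}\frac{\psi(1+k)-\psi(2a+k)}{(a+k)^{2m+1}},
\]
which is the asserted recursion. I do not expect a genuine obstacle here: the one substantive input is the closed form for $\delta_k$ from the preceding example, which rests on the limit $\lim_{z\to -k}\frac{\psi(z)^2-\psi'(z)}{\psi(z)}=2(H_k-\gamma)$ established there (via $\lim_{z\to 0}\frac{\psi(z)^2-\psi'(z)}{\psi(z)}=-2\gamma$ together with $\psi(z+k)=\psi(z)+\sum_{j=0}^{k-1}(z+j)^{-1}$); once that formula is granted, everything else is bookkeeping. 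The only points that merit care are correctly identifying which piece of $\delta_k$ feeds back into $\zeta(2m+2,a)$ and verifying convergence of the digamma series on the right.
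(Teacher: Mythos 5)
Your proposal is correct and follows essentially the same route as the paper: the theorem is obtained by specializing Corollary \ref{cor7} to $a_k=a+k$ with $c_m=\zeta(2m,a)$, inserting the value $\delta_k=2\bigl(\psi(1+k)+\tfrac{1}{a+k}-\psi(2a+k)\bigr)$ computed in the preceding example, absorbing the $\tfrac{1}{a+k}$ contribution as $2\zeta(2m+2,a)$, and dividing by $2$. Your version simply makes explicit the bookkeeping and convergence check that the paper leaves implicit.
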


Observe the formula (\ref{thm8.2}), we conjecture that there is a more general relation\\
\begin{conjecture}
\begin{equation*}
  \frac{F^{(L)}(z)}{F(z)}=\frac{F^{(L)}(\infty)}{F(\infty)}+\sum_{k=0}^{\infty}\sum_{r=0}^{m-1}\frac{F^{(L)}(a_{k})}{F'(a_{k})}(\frac{\omega_{m}^{r}}{z-a_{k}\omega_{m}^{r}})  
\end{equation*}
\end{conjecture}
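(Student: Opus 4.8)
The plan is to establish the conjecture as a Mittag--Leffler expansion, in the spirit of the proof of Theorem~\ref{thm8}. Write $R_{L}(z):=F^{(L)}(z)/F(z)$; since $F$ has only simple zeros (at the points $a_{k}\omega_{m}^{r}$, $k\ge 0$, $0\le r\le m-1$, which are pairwise distinct in the cases of interest), $R_{L}$ is meromorphic with poles only there. The three steps are: compute the principal part of $R_{L}$ at each pole; collapse the $r$-sum using the symmetry of $F$; subtract all principal parts and identify the (constant) remainder as the value ``at infinity''.

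First I would do the local computation. Fix a zero $\alpha$ and write $F(z)=(z-\alpha)G(z)$ with $G$ holomorphic near $\alpha$ and $G(\alpha)\neq 0$; the Leibniz rule gives $F^{(L)}(z)=(z-\alpha)G^{(L)}(z)+L\,G^{(L-1)}(z)$, so
\begin{equation*}
R_{L}(z)=\frac{G^{(L)}(z)}{G(z)}+\frac{L}{z-\alpha}\,\frac{G^{(L-1)}(z)}{G(z)}.
\end{equation*}
Hence $R_{L}$ has a \emph{simple} pole at $\alpha$ with residue $L\,G^{(L-1)}(\alpha)/G(\alpha)$, and since $F'(\alpha)=G(\alpha)$ and $F^{(L)}(\alpha)=L\,G^{(L-1)}(\alpha)$, this residue is exactly $F^{(L)}(\alpha)/F'(\alpha)$. (This already shows no higher-order poles occur, even though the recursion $R_{L}=R_{L-1}'+R_{1}R_{L-1}$ naively produces them --- the same cancellation that makes $H'+H^{2}$ have only simple poles in Theorem~\ref{thm8}. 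If one keeps the normalisation $F(z)=z\prod_{k}(1-(z/a_{k})^{m})$ there is an extra simple pole at $z=0$, whose principal part must be added; it vanishes unless $m\mid (L-1)$.) Next, from $F(\omega_{m}z)=F(z)$ one gets $F^{(L)}(\omega_{m}z)=\omega_{m}^{-L}F^{(L)}(z)$ and $F'(\omega_{m}z)=\omega_{m}^{-1}F'(z)$, so the residue at $a_{k}\omega_{m}^{r}$ equals $\omega_{m}^{(1-L)r}F^{(L)}(a_{k})/F'(a_{k})$. Thus the orbit of $a_{k}$ contributes $\frac{F^{(L)}(a_{k})}{F'(a_{k})}\sum_{r=0}^{m-1}\frac{\omega_{m}^{(1-L)r}}{z-a_{k}\omega_{m}^{r}}$ --- which for $L=2$, and for all $L$ when $m=2$, is the $\omega_{m}^{r}$ of the conjecture; for general $m$ I would record the exponent as $(1-L)r$ (relabelling $r\mapsto -r$ leaves the double sum unchanged, so this is only a normalisation issue).

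For the last step, set $\Phi(z):=R_{L}(z)-\sum_{k\ge 0}\frac{F^{(L)}(a_{k})}{F'(a_{k})}\sum_{r=0}^{m-1}\frac{\omega_{m}^{(1-L)r}}{z-a_{k}\omega_{m}^{r}}$. I would show the double series converges locally uniformly off the zeros: grouping the $m$ points $a_{k}\omega_{m}^{0},\dots,a_{k}\omega_{m}^{m-1}$ and using $\sum_{r=0}^{m-1}\omega_{m}^{-jr}=0$ unless $m\mid j$, the $k$-th block is $O(|a_{k}|^{-m})$ times a factor controlled by the product structure of $F$, so $\sum|a_{k}|^{-m}<\infty$ (sharpened as in the rest of the paper if needed) gives convergence, and by the local computation all singularities of $\Phi$ are removable, so $\Phi$ is entire. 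Then one bounds $R_{L}$ uniformly on the complement of fixed small disks about the zeros: write $R_{L}$ as the complete Bell polynomial in $H,H',\dots,H^{(L-1)}$, $H=R_{1}=F'/F$, with $H^{(j)}(z)=(-1)^{j}j!\sum_{\alpha}(z-\alpha)^{-j-1}$, and estimate these sums via the product and the convergence hypothesis. With $R_{L}$ and the subtracted series both bounded, $\Phi$ is bounded and entire, hence constant by Liouville; letting $z\to\infty$ along a ray missing the zeros kills every summand, so the constant is $\lim_{z\to\infty}R_{L}(z)$ when that exists (it does for $L=2$), and in general is read off as this Liouville constant --- which is what the paper denotes $F^{(L)}(\infty)/F(\infty)$, exactly as for $F''(\infty)/F(\infty)$ in Theorem~\ref{thm8}.

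\textbf{The hard part} will be this last step --- the uniform control of $R_{L}=F^{(L)}/F$ at infinity. For $L=2$ it reduced to the one-line evaluation of $H'+H^{2}$ from the explicit $H(z)=\frac1z+\sum_{k}\frac{2z}{z^{2}-a_{k}^{2}}$; for larger $L$ the Fa\`{a} di Bruno bookkeeping is heavier, and, more seriously, the bare hypothesis $\sum|a_{k}|^{-m}<\infty$ looks too weak to bound $R_{L}$ and the defining series once $L$ is large (e.g.\ when $m\mid L$ the leading block is of size $|a_{k}|^{-1}\,F^{(L)}(a_{k})/F'(a_{k})$), so one will likely need a stronger decay or regularity assumption on $(a_{k})$, or a symmetric/principal-value reading of the double sum --- precisely the convergence discussion the introduction flags as the point of this method.
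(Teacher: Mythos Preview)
There is nothing to compare against: in the paper this statement is explicitly a \emph{conjecture}, introduced by ``we conjecture that there is a more general relation'' immediately after Theorem~\ref{thm8}, and no proof (or proof sketch) is given. So your proposal is not a reconstruction of the paper's argument but an attempt to settle an open question.

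As a strategy, your Mittag--Leffler approach is the natural one and your local computation is correct: the residue of $F^{(L)}/F$ at a simple zero $\alpha$ is indeed $F^{(L)}(\alpha)/F'(\alpha)$, and the symmetry $F(\omega_{m}z)=\omega_{m}F(z)$ (with the normalisation $F(z)=z\prod_{k}(1-(z/a_{k})^{m})$ used in Theorem~\ref{thm8}) gives the residue at $a_{k}\omega_{m}^{r}$ as $\omega_{m}^{(1-L)r}F^{(L)}(a_{k})/F'(a_{k})$. You are right to flag that this matches the paper's $\omega_{m}^{r}$ only in special cases (e.g.\ $L=2$, or $m\mid L$); for general $m,L$ the conjectured exponent needs correcting. (Your side remark ``for all $L$ when $m=2$'' is slightly off: for odd $L$ and $m=2$ the exponent $(1-L)r$ is even, so the factor is $1$, not $\omega_{2}^{r}$.)

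You have also correctly located the genuine obstacle: the Liouville step. The paper proves only the case $L=2$, $m=2$, where $F''/F$ is handled by the explicit identity $H'+H^{2}$ and the quadratic decay built into $\sum|a_{k}|^{-2}<\infty$. For higher $L$ the Fa\`a di Bruno expression for $R_{L}$ involves products of up to $L$ copies of $H$ and its derivatives, and the bare hypothesis $\sum|a_{k}|^{-m}<\infty$ is, as you say, too weak to control either $R_{L}$ or the subtracted series term-by-term. Without an additional hypothesis (stronger decay of $a_{k}$, or a principal-value interpretation of the double sum) the boundedness of $\Phi$ is not established, so your outline does not yet close to a proof --- which is consistent with the paper leaving the statement as a conjecture.
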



\bibliographystyle{unsrt}  


Xiaowei Wang(\begin{CJK}{UTF8}{gbsn}王骁威\end{CJK})\\
Institut f\"{u}r Mathematik, Universit\"at  Potsdam, Potsdam OT Golm, Germany\\
 Email: \texttt{xiawang@gmx.de}

\end{document}